\newtheorem{theorem}{Theorem}[section]
\newtheorem{lemma}[theorem]{Lemma}
\newtheorem{proposition}[theorem]{Proposition}
\newtheorem{remark}[theorem]{Remark}
\newcommand\CC{\hbox{C\kern -.58em {\raise .54ex \hbox{$\scriptscriptstyle |$}}
  \kern-.55em {\raise .53ex \hbox{$\scriptscriptstyle |$}} }}
\newcommand\NN{\hbox{I\kern-.2em\hbox{N}}}
\newcommand\RR{\mathbb{R}}
\newcommand\SSS{\mathbb{S}}
\newcommand\ZZ{{{\rm Z}\kern-.28em{\rm Z}}}
\newcommand\xx{ \mathbf{x} }
\newcommand\vv{ \mathbf{v} }
\newcommand\ds{ \displaystyle }
\newcommand\bB{{\mathbf B}}
\newcommand\bJ{{\mathbf J}}
\newcommand\bP{{\mathbf P}}
\newcommand\bR{{\mathbf R}}
\newcommand\bU{{\mathbf U}}
\newcommand{\beq}{\begin{equation}}
\newcommand{\eeq}{\end{equation}}
\newcommand{\beqa}{\begin{eqnarray}}
\newcommand{\eeqa}{\end{eqnarray}}
\newcommand{\bit}{\begin{itemize}}
\newcommand{\eit}{\end{itemize}}
\newcommand{\bedef}{\begin{defn}}
\newcommand{\edefn}{\end{defn}}
\newcommand{\bpro}{\begin{prop}}
\newcommand{\epro}{\end{prop}}
\newcommand{\df}{\partial}
\newcommand{\bv}{{\bf v}}
\newcommand{\bx}{{\bf x}}
\newcommand{\by}{{\bf y}}
\newcommand{\bq}{{\bf q}}
\newcommand{\bu}{{\bf u}}
\newcommand{\bn}{{\bf n}}
\newcommand{\mE}{{\mathcal E}}
\newcommand{\mT}{{\mathcal T}}
\newcommand{\mG}{{\mathcal G}}
\newcommand{\mU}{{\mathcal U}}
\newcommand{\veps}{{\varepsilon}}
\def\signFF{\bigskip\bigskip\hspace{80mm}
\vbox{{\sc Francis Filbet\par\vspace{3mm}
Universit\'e de Toulouse III \& IUF \par
Institut de Math\'ematiques de Toulouse,\par
118, route de Narbonne\par
F-31062 Toulouse cedex,  FRANCE
\par\vspace{3mm}e-mail:} francis.filbet@math.univ-toulouse.fr }}
\def\signCWS{\bigskip\bigskip\hspace{80mm}
\vbox{{\sc Chi-Wang Shu \par\vspace{3mm}
Division of Applied Mathematics \par 
Brown University \par 
Providence, RI 02912, USA
\par\vspace{3mm}e-mail:}  shu@dam.brown.edu}}
\begin{document}

\title[Numerical methods  for a kinetic model of
  self-organized dynamics]{Discontinuous-Galerkin methods for a kinetic model of
  self-organized dynamics}

\begin{abstract}
This paper deals with the numerical resolution of kinetic models for
systems of self-propelled particles subject to alignment interaction
and attraction-repulsion. We focus on the kinetic model considered in
\cite{DM, DLMP} where alignment is taken into account in addition of
an attraction-repulsion interaction potential.  We apply a
discontinuous Galerkin method for the free transport and non-local
drift velocity together with a spectral method for the velocity
variable. Then, we analyse
consistency and stability of the semi-discrete scheme. We propose several numerical experiments which provide a solid validation of the method and its underlying concepts.
\end{abstract}

\author{Francis Filbet and Chi-Wang Shu}

\maketitle

{\bf Key words: } {Self-propelled particles, alignment dynamics, kinetic model,
discontinuous Galerkin method}

\medskip
\noindent
{\bf AMS Subject classification: } 35L60, 35K55, 35Q80, 82C05, 82C22, 82C70, 92D50.
\vskip 0.4cm

\tableofcontents

\section{Introduction} 
\setcounter{equation}{0}
\label{sec:1}

Theoretical and mathematical biology communities have paid a great
deal of attention to explain large scale structures in animal
groups. Coherent structures appearing from seemingly direct
interactions between individuals have been reported in many different
species like fishes, birds, and insects \cite{HW,parrish,
  mogilner,BDT,Couzin,camazine} and many others, see also the reviews
\cite{review,review2,kolo}. There has been an intense literature about the modeling of
interactions between individuals among animal societies such as fish
schools, bird flocks, herds of mammalians, etc. We refer for instance to
\cite{Aldana_Huepe, Aoki, Couzin, chate} but an exhaustive
bibliography is out of reach. Among these models, the Vicsek model
\cite{Vicsek} has received particular attention due to its simplicity
and the universality of its qualitative features. This model is an
individual based model  or agent-based
model which consists of a time-discretized set of Ordinary
Differential Equations for the particle positions and velocities. 
A time-continuous version of this model and its kinetic formulation are
available in \cite{DM}.  A rigorous derivation of this kinetic model
from the time-continuous Vicsek model can be found in \cite{BCC} and
in \cite{DLMP} when adding an attraction-repulsion force.

On the other hand, hydrodynamic models are attractive over particle
ones due to their computational efficiency. For this reason, many such
models have been proposed in the literature \cite{CDP, CDMBC, DCBC,
  mogilner, TB1, TB2}. However, most of them are phenomenological. For
instance in \cite{DM}, the authors  propose one of the first rigorous
derivations of a hydrodynamic version of the Vicsek model (see also
\cite{KRZB,RBKZ,RKZB} for phenomenological derivations). It has been
expanded in \cite{DM2} to account for a model of fish behavior where
particles interact through curvature control, and in \cite{DY} to
include diffusive corrections. Other variants have also been
investigated \cite{DFL,Frouvelle,FL}. For instance, \cite{Frouvelle}
studies the influence of a vision angle and of the dependency of the
alignment frequency upon the local density, whereas in \cite{DFL,FL},
the authors study a modification of the model which results in phase transitions from disordered to ordered equilibria as the density increases and reaches a threshold, in a way similar to polymer models \cite{DE, Onsager}.

In this paper, we will focus on the numerical approximation of a kinetic model for self-propelled
particles. The self-propulsion speed is supposed to be constant and
identical for all the particles. Therefore, the velocity variable
reduces to its orientation in the ($d-1$)-dimensional sphere ${\mathbb S}^{d-1}$.
The particle interactions consist in two parts: 
\begin{itemize}
\item an alignment rule which tends to relax the particle velocity to
  the local average orientation; 
\item an attraction-repulsion rule which makes the particles move
  closer or farther away from each other. 
\end{itemize}

This model is inspired both by the Vicsek model \cite{Vicsek} and the
Couzin model \cite{Aoki, Couzin} describing interactions at the
microscopic level. This approach has led to different types of models for swarming: microscopic models and macroscopic models involving macroscopic quantities (e.g. mass,
flux). Here we study an intermediate approach, called the
mesoscopic scale, where we investigate the time evolution of a
distribution function of particles $f(t,\bx,\bv)$, depending on time
$t\geq 0$, position $\bx\in\Omega\subset\RR^d$ and velocity
$\bv\in{\mathbb S}^{d-1}$. This
distribution function is solution to a kinetic equation describing
the motion of particles and their change of directions. Several works
have already studied kinetic models for swarming \cite{CCR,HT08,CDP},
but few have done a numerical investigation. For macroscopic and
microscopic models, we refer to \cite{DM, Dimarco, Dimarco2} for
discrete particle approximation of microscopic models and finite volume or
finite difference approximations \cite{Dimarco2} where complex
structures may be observed as in \cite{FFCWS, Dimarco2}.  Concerning
kinetic model, we refer to  \cite{gamba2} for the first work on this
topic, where the
authors propose a spectral discretization of the operator describing
the change of direction with the flavor of what has been done for the
Boltzmann equation \cite{rey, FHJ}. This spectral discretization is coupled with a finite volume approximation for the
transport \cite{F1, FR03,FM11,FY}. For the study of bacteria's motion,
we also mention \cite{FY13} where similar structures have been
observed (band's formation). Here we present a local discontinuous Galerkin method for
computing the approximate solution to the kinetic model and to get high order approximations in time,
space and velocity. Indeed, the preservation of high order accuracy allows to
investigate complex structures in space as it has already been
observed for macroscopic models \cite{Dimarco,Dimarco2}.   

The paper is organized as follows: we first present precisely the
kinetic model and give the main assumptions on the regularity of
the unique solution to prove convergence and error estimates on the
approximation to the exact smooth solution. Then, in Section \ref{sec:2} we develop a numerical scheme
(local discontinuous Galerkin method) for the kinetic model. In
Sections \ref{sec:3} and \ref{sec:4}, we perform a stability and
convergence analysis of the proposed numerical methods. Numerical
investigations are presented in Section \ref{sec:5} where the order accuracy is
verified and we observe the formation of complex structures.

\subsection{Agent-based model of self-alignment with attraction-repulsion}
\label{sec:1.1}

The starting point of this study is an Individual-Based Model of
particles interacting through self-alignment \cite{Vicsek} and
attraction-repulsion \cite{Aoki, Couzin}. Specifically, we consider
$N$ particles $\bx_i\in\mathbb{R}^d$, with $d=2$ or $3$,  moving at a
constant speed $\bv_i \in\mathbb{S}^{d-1}$. Each particle adjusts its
velocity to align with its neighbors and to get closer or further
away. Therefore, the evolution of each particle is modeled by the
following dynamics: for any $1\leq i \leq N$ 
\begin{equation}
  \label{eq:particle_1}
\left\{\begin{array}{l}
 \ds \frac{d\bx_i}{dt} = \bv_i,
 \\
\,
\\
\ds d\bv_i = \bP_{\bv_i^\bot} \big(\overline{\bv}_i\,dt \,+\,
\sqrt{2\nu}\;\, d \bB_t^i \big),
\end{array}\right.
\end{equation}
where $\bB_t^i$ is a Brownian motion and $d$ represents the noise
intensity whereas $\bP_{\bv_i^\bot}$ is the projection matrix onto the normal plane to $\bv_i$:

\begin{displaymath}
  \bP_{\bv^\bot} = \mbox{Id} - \bv \otimes \bv,
\end{displaymath}
which ensures that $\bv_i$ stays of norm $1$. 

Both the alignment and attraction-repulsion rules are taken into
account in the macroscopic velocity $\overline{\bv}_i\in \mathbb{S}^{d-1}$:
\begin{displaymath}
  \overline{\bv}_i \,=\, \frac{1}{|\bJ_i + \bR_i|}\,(\bJ_i \,+\, \bR_i),
\end{displaymath}
where $\bJ_i$ counts for the alignment and $\bR_i$ for the attraction-repulsion:
\begin{equation}
  \label{eq:j_r_particle}
  \bJ_i \,=\, \sum_{j=1}^N k(|\bx_j-\bx_i|) \,\bv_j, \qquad \bR_i \,=\, -\sum_{j=1}^N 
  \nabla_{\bx_i} \phi(|\bx_j-\bx_i|),
\end{equation}
where the kernel $k$ is a positive function, $\phi'$ can be both negative
(repulsion) and positive (attraction) and for simplicity, we will
assume that both $k$ and $\phi$ are compactly supported in $[0,\infty)$.

\subsection{Kinetic model of self-alignment with attraction-repulsion}
\label{sec:1.2}

When the number of particles becomes large, that is
$N\rightarrow\infty$, one can formally derive a Vlasov type
equation. It describes the evolution of a system of particles under the effects of external and
self-consistent fields. The unknown $f(t,\xx,\vv)$, depending on the
time $t$, the position $\xx$, and the velocity $\vv$, represents the
distribution of particles in phase space for each species with
$(\xx,\vv) \in \Omega\times \SSS^{d-1}$, $d=1,..,3$, where $\Omega\subset\RR^d$. Its behaviour is
given by the Vlasov equation \cite{BCC,DM,sznitman_topics_1989,DLMP}, 
\begin{equation}
\frac{\partial f}{\partial t} \,+\, \bv \cdot \nabla_\bx f = - {\rm div}_\bv\left[ \bP_{\bv^\bot} \bv_f\, f - \nu \,\nabla_\bv f\right],
\label{kinetic:eq}
\end{equation}
where $\nu>0$ and
\begin{equation}
\left\{
\begin{array}{l}
\ds\bv_f = \frac{1}{|\bJ_f+\bR_f|}\,(\bJ_f+\bR_f),  
\\
\,
\\
 \ds\bJ_f = \int_{\Omega\times\SSS^{d-1}} k(|\bx - \bx'|) \bv' \,
 f(t,\bx',\bv') \, d\bx'd\bv', 
\\
\;
\\ 
\ds\bR_f = - \nabla_\bx \int_{\Omega\times\SSS^{d-1}}\phi(|\bx - \bx'|) \, f(t,\bx',\bv') \, d\bx'd\bv'.
\end{array}\right.
\label{vel:eq}
\end{equation}
In general, the function $\phi$ is such that $\phi(r) \rightarrow 0$ when $r
\rightarrow \infty$, but here we will assume that both $k$ and $\phi$
are nonnegative functions which  satisfy
\beq
\label{hyp:00}
k, \phi \in {\mathcal C}^p_c([0,\infty)), \quad{\rm with }\, p\geq 2
\eeq
and  for periodic boundary conditions in space, we have
$$
\left\{\begin{array}{l}
\ds\bJ_f(t,\bx) \,=\, \int_{{\rm supp}(k)} k(|\by|) \,\rho\,\bu(t,\bx+\by) \, d\by, 
\\ \,\\
\ds \bR_f(t,\bx) \,=\, -\nabla_\bx\int_{{\rm supp}(\phi)} \phi(|\by|)\, \rho(t,\bx+\by) \, d\by. 
\end{array}\right.
$$
Furthermore we assume that the system
(\ref{kinetic:eq})-(\ref{vel:eq}) has a smooth solution such that
$$
f \in H^{k+2}([0,T]\times\Omega\times\SSS^{d-1}),
$$
with $\bJ_f$ and $\bR_f$ such that for any $T >0$, there exists a
constant $\xi_T>0$ such that for all $(t,\bx)\in [0,T]\times \Omega$
\beq
\label{hyp:01}
|\bJ_f(t,\bx) + \bR_f(t,\bx) | \geq \xi_T.
\eeq

Using the distribution $f$ and a rescaling of
(\ref{kinetic:eq}), it is possible to identify the
asymptotic behavior of the model in different regimes as in
\cite{DLMP} and to recover some classical hydrodynamic model for the
self-organized dynamics. 

The model studied in this paper is a generalization of the model of
\cite{DM} with the addition of an attraction-repulsion interaction
potential \cite{DLMP}.

\section{Numerical Methods}
\setcounter{equation}{0}
\label{sec:2}

In this section, we will introduce the discontinuous Galerkin
algorithm for the system (\ref{kinetic:eq})-(\ref{vel:eq}).   
Discontinuous Galerkin methods are particularly suited
for transport type equations with several attractive
properties, such as their easiness for adaptivity and
parallel computation, and their nice stability properties.
We refer to the survey paper \cite{CS} and the references
therein for an introduction to discontinuous Galerkin methods.
For discontinuous Galerkin methods solving kinetic type equations
we refer to \cite{CGMS,ACS}.
We consider an open set $\Omega\subset \RR^d$ where all boundary
conditions are  periodic, and $f(t,\bx,\bv)$ is assumed to be in the
unit sphere $\SSS^{d-1}$. 

\subsection{Notations}
\label{sec:2.1}

Let $\mT_h^\bx=\{K_\bx\}$ and $\mT_h^\bv=\{K_\bv\}$ be  partitions of
$\Omega$ and $\SSS^{d-1}$, respectively,  with $K_\bx$ and $K_\bv$
being  Cartesian elements
;  then 
$$
\mT_h=\left\{K\subset \Omega\times\SSS^{d-1}; \quad K=K_\bx\times K_\bv\in\mT_h^\bx\times \mT_h^\bv\right\}
$$ 
defines a partition of $\Omega\times\SSS^{d-1}$.

Let $\mE$ be the set of the edges of $\mT_h$ will
be $\mE=\mE_\bx \cup \mE_\bv$ as
$$
\left\{\begin{array}{l}
\mE_\bx\,=\, \left\{\sigma_\bx\times K_\bv: \quad
\sigma_\bx\in\partial K_\bx, \, K_\bv\in\mT_h^\bv\right\},
\\
\,
\\
\mE_\bv\,=\,\left\{ K_\bx\times \sigma_\bv: \quad K_\bx\in\mT_h^\bx,
\,\sigma_\bv\in \partial K_\bv\right\}.
\end{array}\right.
$$

Next we define the discrete spaces
\begin{equation}
\label{G:space}
\mG_h^{k} \,\,=\,\,\left\{g\in L^2(\Omega\times\SSS^{d-1}):\quad  g|_K\in
  P^k(K), \, K\in\mT_h \right\},
\end{equation}
and 
$$
\mU_h^{k}\,\,=\,\,\left\{\bU\in [L^2(\Omega\times\SSS^{d-1})]^{d-1}: \quad \bU|_K\in [P^k(K)]^{d-1}, \, K\in\mT_h \right\},
$$
where $P^{k}(K)$ denotes the set of polynomials of  total degree at
most $k$ on $K$, and $k$  is  a nonnegative integer.

Note the space $\mG_h^{k}$, which  we use to approximate $f$,  is called P-type, and it can be replaced by the tensor product of P-type spaces in $\bx$ and $\bv$,
$$
\left\{g\in L^2(\Omega\times\SSS^{d-1})\,: \,\,g|_{K}\in
  P^k(K_\bx)\times P^k(K_\bv), \, K=K_\bx\times K_\bv\in\mT_h \right\},
$$
or by the tensor product space in each variable, which is called  Q-type 
$$
\left\{g\in L^2(\Omega\times\SSS^{d-1})\,:\,\, g|_{K}\in
  Q^k(K_\bx)\times Q^k(K_\bv), \, K=K_\bx\times K_\bv\in\mT_h \right\}.
$$
Here $Q^k(K)$ denotes the set of polynomials of    degree at most $k$
in each variable on $K$. The numerical methods formulated in this
paper, as well as the conservation, stability, and error estimates,
hold when any of the spaces above is used to approximate $f$. 

\begin{remark}
In our simulations of Section \ref{sec:5}, we use the P-type of
\eqref{G:space} as it is the smallest and therefore renders the most
cost efficient algorithm. 
\end{remark}
 
For piecewise  functions defined with respect to $\mT_h^\bx$ or
$\mT_h^\bv$, we further introduce the jumps and averages as
follows. For $\alpha\in\{\bx,\bv\}$ and for any edge $\sigma=\{K_\alpha^+\cap K_\alpha^-\}\in\mE_\alpha$, with $\bn_\alpha^\pm$ as the outward unit normal to $\partial K_\alpha^\pm$,
$g^\pm=g|_{K_\alpha^\pm}$,  the jumps across $\sigma$ and the averages are defined  as
\begin{equation}
\label{gplus}
[g]_\alpha={g^+}-{g^-},\qquad
\{g\}_\alpha=\frac{1}{2}({g^+}+{g^-}),\quad \alpha\in\{\bx,\bv\}.
\end{equation}

\subsection{The semi-discrete discontinuous Galerkin  method}
\label{sec:2.1.}

The numerical methods proposed in this section are formulated for the
system (\ref{kinetic:eq})-(\ref{vel:eq}).  Given $k, r\geq 0$, the
semi-discrete discontinuous Galerkin methods for  the
system (\ref{kinetic:eq})-(\ref{vel:eq}) are defined by the  following
procedure: for any $K=K_\bx\times K_\bv\in\mT_h$, we look for
$(f_h,\bq_h)\in\mG_h^k\times \mU_h^k$, $\bv_{f_h}\in\mU_h^r$, such that
for all $g\,\in\,\mG_h^k$, 
\begin{eqnarray}
&&\int_K\frac{\partial f_h}{\partial t}  g \,d\bx d\bv - \int_K f_h\bv\cdot\nabla_\bx g \,d\bx d\bv
- \int_K ( \bP_{\bv^\perp}  \bv_{f_h}\,f_h\,-\, \nu\,\bq_h)\cdot\nabla_\bv g \,d\bx
d\bv
\label{eq:scheme:1a}
\\
\nonumber
&+& \,\int_{\sigma_\bx} \widehat{f_h \,\bv}\cdot \bn_x\,g^-
\,ds_\bx \,d\bv \,+\, \int_{\sigma_\bv} \left(
  \widehat{f_h \,\bP_{\bv^\perp}\bv_{f_h}} \,-\,\nu\,\widehat{\bq_h}\right)\cdot \bn_\bv \, g^- \,ds_\bv d\bx
\,=\,0, \,
\end{eqnarray}
where $\bn_\bx$ and $\bn_\bv$ are outward unit normals of $\df K_\bx$
and $\df K_\bv$, respectively, whereas $\bq_h$ is given by
\begin{equation}
\label{eq:scheme:1b}
\ds\int_K \bq_h \cdot\bu \,d\bx \,d\bv \,+\, \int_K f_h\,{\rm div}_{\bv}\bu \,d\bx
d\bv -  \int_{\sigma_\bv}
\widehat{ f_h} \,\bn_\bv \cdot \bu^- \,d\bx\,ds_\bv \,=\, 0,\,\forall\bu\,\in\, \mU_h^k. 
\end{equation}
Furthermore,  the velocity $\bv_{f_h} \in
L^\infty(\Omega)$ with $\|\bv_{f_h}\|=1$, and  
\beq
\label{eq:scheme:2}
{\bv}_{f_h}(t,\bx) \,=\, \frac{1}{\|\bJ_{h}(t,\bx)+\bR_{h}(t,\bx)\|}\,(\,\bJ_{h}(t,\bx)+\bR_{h}(t,\bx)\,),
\eeq
with $\bJ_h$ and $\bR_h$  computed by
\beq
\left\{
\begin{array}{l}
\ds \bJ_{h}(t,\bx)  = \int_{\Omega} k(|\bx-\bx'|)\, \rho_h \bu_h(t,\bx')
 \,d\bx',
\\
\,
\\
\ds\bR_{h} = \int_{\Omega} \nabla_{\bx}\phi (|\bx-\bx'|) \,\rho_h(t,\bx') d\bx',
\end{array}\right.
\label{eq:scheme:3}
\eeq
where $\rho_h$ and $\bu_h$ are defined from the distribution function
$f_h$, by
$$
\rho_h \,=\, \int_{\SSS^{d-1}} f_h d\vv, \quad \rho_h\bu_h \,=\,
\int_{\SSS^{d-1}} \bv \,f_h d\vv.
$$
 All  hat  functions are numerical
fluxes that are determined by upwinding for convection
and local DG alternating for diffusion, {\it i.e.} for the
convective terms in (\ref{eq:scheme:1a})
\begin{equation}
\label{eq:flux:1a}
\left\{\begin{array}{l}
\ds \widehat{f_h \bv} \cdot
  \bn_\bx \,\,=\,\,  \bv \cdot \bn_\bx \,\{f_h \}_\bx \,-\,\frac{|\bv\cdot\bn_\bx|}{2}[f_h]_\bx,
\\
\,
\\
\ds\widehat{f_h\,\bP_{{\bv}^\perp} \bv_{f_h}} \cdot
  \bn_\bv\,\,=\,\,  \bP_{{\bv}^\perp} \bv_{f_h}\cdot
  \bn_\bv   \, \{f_h\}_\bv \,-\,\frac{|\bP_{{\bv}^\perp} \bv_{f_h}|}{2}[f_h]_\bv,
\end{array}\right.
\end{equation}
and for the diffusive terms in (\ref{eq:scheme:1b}), we apply 
\begin{equation}
\label{eq:flux:2}
\ds\widehat{\bq_h}\cdot \bn_\bv
\,\,=\,\, \{\bq_h\}\cdot\bn_\bv \,+\, \frac{C_{11}}{2} [f_h]_\bv,
\qquad
\ds\widehat{f_h}  \bn_\bv\,\,=\,\, \{f_h\} \,\bn_\bv \,+\, 
\frac{C_{22}}{2}\, [\bq_h]_\bv
\end{equation}
where $C_{11}, C_{22} >0$. 

This completes the definition of our Discontinuous-Galerkin method,
but to facilitate its study, we recast its formulation. We sum (\ref{eq:scheme:1a}) and (\ref{eq:scheme:1b}) over all elements and define $a_h(.)$ and $b_h(.)$ such that for
$(f_h,\bq_h)\in\mG_h^k\times \mU_h^k$ and  $g\,\in\,\mG_h^k$
\begin{eqnarray}
\label{def:ah}
a_h(f_h,\bq_h,\bv_{f_h}, g) &:=&
                                 \int_{\Omega\times\SSS^{d-1}}\left(\frac{\partial f_h}{\partial t}  g -  f_h\bv\cdot\nabla_\bx g \right)\,d\bx d\bv
\\
&-& \int_{\Omega\times\SSS^{d-1}}\left(  \bP_{\bv^\perp}  \bv_{f_h}\,f_h\,-\, \nu\,\bq_h\right)\cdot\nabla_\bv g\,d\bx d\bv
\nonumber
\\
&-& \,\sum_{\sigma_\bx\in \mE_\bx}\int_{\sigma_\bx} \widehat{f_h \,\bv}\cdot \bn_x\,[g]_\bx
\,ds_\bx \,d\bv 
\nonumber
\\
&-& \,\sum_{\sigma_\bv\in \mE_\bv}\int_{\sigma_\bv} \left(
  \widehat{f_h \,\bP_{\bv^\perp}\bv_{f_h}}
    \,-\,\nu\,\widehat{\bq_h}\right)\cdot \bn_\bv \, [g]_\bv \,ds_\bv
    d\bx,
\nonumber
\end{eqnarray}
and for $\bu\,\in\, \mU_h^k$ 
\begin{eqnarray}
\label{def:bh}
b_h(f_h,\bq_h, u) &:=& \int_{\Omega\times\SSS^{d-1}} \left(\bq_h
                       \cdot\bu \,+\, f_h\,{\rm div}_{\bv}\bu \right)\,d\bx
d\bv 
\\
\nonumber
&+& \sum_{\sigma_\bv\in \mE_\bv}\int_{\sigma_\bv}\widehat{ f_h} \,\bn_\bv \cdot [\bu]_\bv \,d\bx\,ds_\bv,
\end{eqnarray}
where we  notice that  $a_h$ (resp. $b_h$) is linear with respect to
$(f_h,\bq_h)$ and $g$ (resp. $\bu$). 

We prove the following boundedness and error estimate results.

\begin{theorem}
Assume that the solution to (\ref{kinetic:eq})-(\ref{vel:eq}) is such that $f\in
H^{k+2}([0,T]\times\Omega\times\SSS^{d-1})$ with the two hypothesis
(\ref{hyp:00}) and (\ref{hyp:01}). We also assume  that the initial data $f_h(0)$ is uniformly bounded in
$L^2(\Omega\times\SSS^{d-1})$ and for $k\geq 0$, we consider the numerical solution $(f_h,\bq_h)\in\mG_h^k\times
\mU_h^k$ given by (\ref{eq:scheme:1a})-(\ref{eq:flux:2}) supplemented
with periodic boundary conditions. Then, for any $h_0>0$, there exists $C_T>0$,
depending on $f$,  $T$ and $h_0$, such that  for $h<h_0$
$$
\|f_h(t)\|_{L^2}^2 \,+\, \int_0^t\|\bq_h(s)\|_{L^2}^2\,ds \,\leq\, C_T, \quad t\in [0,T]
$$
and 
$$
\|f(t)- f_h(t)\|_{L^2} \,+\,  \left(\int_0^t\|\bq(s)-\bq_h(s)\|_{L^2}^2 ds\right)^{1/2} \,\leq\, C_T\, h^{k+1/2}, \quad t\in [0,T].
$$
\label{th:1}
\end{theorem}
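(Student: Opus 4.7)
The plan is to follow the standard energy method for local discontinuous Galerkin schemes, combined with Gronwall's lemma, but with special care taken for the nonlocal nonlinear drift $\bv_{f_h}$. Throughout I write the scheme in compact form as $a_h(f_h,\bq_h,\bv_{f_h},g)=0$ and $b_h(f_h,\bq_h,\bu)=0$.

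\textbf{Step 1: $L^2$ stability.} I would take $g = f_h$ in $a_h$ and $\bu = \nu\,\bq_h$ in $b_h$ and add the two identities. Integration by parts on each element turns the interior transport term $-\int f_h\,\bv\cdot\nabla_\bx f_h$ into edge contributions that combine with the upwind numerical flux \eqref{eq:flux:1a} to yield the dissipative quantity $\frac{1}{2}\sum_{\sigma_\bx}\int_{\sigma_\bx}|\bv\cdot\bn_\bx|\,[f_h]_\bx^{2}$; the same mechanism on $\mE_\bv$ using $\bP_{\bv^\perp}\bv_{f_h}$ produces an analogous nonnegative jump contribution. The LDG alternating fluxes \eqref{eq:flux:2} with $C_{11},C_{22}>0$ give $\nu\,\|\bq_h\|_{L^2}^{2}$ plus a nonnegative $\frac{\nu C_{11}}{2}\sum_{\sigma_\bv}\int[f_h]_\bv^{2}$. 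The only non-sign-definite interior term that remains is $\int \bP_{\bv^\perp}\bv_{f_h}\,f_h\cdot\nabla_\bv f_h$, and since $\|\bv_{f_h}\|_\infty\le 1$ and $\|\bP_{\bv^\perp}\|\le 1$, after integrating by parts it collapses to a harmless boundary/curvature term bounded by $C\|f_h\|_{L^2}^{2}$. Gronwall then closes the first estimate.

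\textbf{Step 2: Error equation and projection.} For the error estimate, I would introduce the standard $L^2$-projection $\Pi_h : L^2(\Omega\times\SSS^{d-1}) \to \mG_h^{k}$ (and analogously $\bPi_h$ on $\mU_h^{k}$) with the optimal approximation and trace properties $\|f - \Pi_h f\|_{L^2}+h^{1/2}\|f-\Pi_h f\|_{L^2(\mE)} \le C\,h^{k+1}\|f\|_{H^{k+1}}$. Split $f - f_h = \eta + \xi$ with $\eta = f - \Pi_h f$ and $\xi = \Pi_h f - f_h \in \mG_h^{k}$, and similarly for $\bq$. By construction, the exact solution $(f,\bq)$ satisfies $a_h(f,\bq,\bv_f,g)=0$ and $b_h(f,\bq,\bu)=0$ (consistency of the DG formulation), so subtracting yields an error identity in $\xi$. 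Testing with $g = \xi$ and $\bu = \nu\,\bq_h^{\rm err}$ reproduces the coercive terms from Step 1 on the left, while on the right sit projection-error terms in $\eta$ and a nonlinear term involving $\bv_f - \bv_{f_h}$.

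\textbf{Step 3: Controlling the nonlocal drift.} The key lemma I would establish is that the map $f \mapsto \bv_f$ defined by \eqref{vel:eq} is locally Lipschitz in $L^2$: using hypothesis \eqref{hyp:01}, the lower bound $|\bJ_f + \bR_f|\ge \xi_T$, and the smoothness of $k,\phi$ via \eqref{hyp:00}, one obtains $\|\bv_{f} - \bv_{f_h}\|_{L^\infty(\Omega)} \le C_T\,\|f - f_h\|_{L^2}$. This requires a preliminary argument that $\bv_{f_h}$ remains well-defined, namely that $|\bJ_h + \bR_h|$ stays bounded away from zero on $[0,T]$; I would obtain this by a continuation/bootstrap argument using the stability bound of Step 1 and $h < h_0$ small enough. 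Combined with $\|\bP_{\bv^\perp}\|\le 1$, the nonlinear cross term $\int (\bv_f - \bv_{f_h})\,f\,\cdot\nabla_\bv \xi$ is bounded by $C_T\|\xi\|_{L^2}^{2} + C_T\|\eta\|_{L^2}^{2}$ after an inverse inequality $\|\nabla_\bv \xi\|_{L^2(K)} \le C h^{-1}\|\xi\|_{L^2(K)}$ is avoided by instead integrating the projection-error piece by parts and absorbing it into the jump/coercive terms on the left.

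\textbf{Step 4: Conclusion and main obstacle.} Collecting everything, the projection terms contribute $O(h^{k+1/2})$ after using the trace approximation estimate (the loss of half a power from $h^{k+1}$ to $h^{k+1/2}$ is the standard upwind DG loss, coming from the edge terms $\int_{\sigma}|\bv\cdot\bn|\,|\eta|\,|\xi|$ estimated by Cauchy–Schwarz against the jump-dissipation on the left). A Gronwall argument in $\|\xi(t)\|_{L^2}^{2} + \nu\int_0^{t}\|\bq_h^{\rm err}\|_{L^2}^{2}$, together with the triangle inequality and the assumed bound on $\|f_h(0)\|_{L^2}$, yields the stated $h^{k+1/2}$ estimate. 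The genuine obstacle is Step 3: verifying that the discrete drift $\bv_{f_h}$ does not become singular along the evolution and that its Lipschitz dependence on $f_h$ holds uniformly in $h$, because the projection $\bP_{\bv^\perp}\bv_{f_h}$ is the only mechanism through which the nonlinearity enters, and any blow-up of $|\bJ_h+\bR_h|^{-1}$ would destroy both the stability estimate and the Gronwall closure.
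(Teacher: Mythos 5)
Your outline follows the paper's proof almost exactly: $L^2$ stability by testing with $g=f_h$ and $\bu=\bq_h$ (the interior drift term becomes $-\frac{1}{2}\int f_h^2\,{\rm div}_\bv(\bP_{\bv^\perp}\bv_{f_h})$, bounded because $\|\bv_{f_h}\|=1$ by construction of the scheme), the splitting $f-f_h=\xi_h+\delta_h$ with the $L^2$ projection, consistency of the fluxes so that the exact solution satisfies $a_h=b_h=0$, the $h^{k+1/2}$ loss coming from the trace estimates on the edge terms, and Gronwall to close. The one place where you genuinely diverge is Step 3, and it is worth flagging because your version would create a real difficulty that the paper's version does not have.

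You propose to prove the Lipschitz bound $\|\bv_f-\bv_{f_h}\|_{L^\infty}\le C\|f-f_h\|_{L^2}$ by first establishing, via a continuation/bootstrap argument, that $|\bJ_h+\bR_h|$ stays bounded away from zero. Such a bootstrap is circular unless it feeds off the error estimate itself, and it would force a smallness condition on $h$ (so that $\|\bJ_h+\bR_h-(\bJ_f+\bR_f)\|_{L^\infty}\le C h^{k+1/2}<\xi_T/2$), which is incompatible with the theorem as stated: the claim holds for all $h<h_0$ with $h_0$ arbitrary, and $C_T$ merely depends on $h_0$. The paper avoids the issue entirely with the algebraic decomposition
$$
\bv_f-\bv_{f_h}\,=\,\frac{(\bJ_f+\bR_f)-(\bJ_h+\bR_h)}{|\bJ_f+\bR_f|}\,+\,\frac{\bJ_h+\bR_h}{|\bJ_h+\bR_h|}\cdot\frac{|\bJ_h+\bR_h|-|\bJ_f+\bR_f|}{|\bJ_f+\bR_f|},
$$
in which the only denominator that matters is $|\bJ_f+\bR_f|\ge\xi_T$ (hypothesis (\ref{hyp:01}) on the \emph{exact} solution), since the factor $(\bJ_h+\bR_h)/|\bJ_h+\bR_h|$ has unit norm; the discrete quantity $|\bJ_h+\bR_h|^{-1}$ never appears alone. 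Combined with the $L^\infty$ bounds on $\bJ_f-\bJ_h$ and $\bR_f-\bR_h$ in terms of $\|\veps_{h,1}\|_{L^2}$ (which follow from the $L^2$ bounds on $\rho_h$ and $\rho_h\bu_h$ and the regularity of $k$ and $\phi$), this gives the Lipschitz estimate with no condition on $h$ and no bootstrap. If you adopt this decomposition, the obstacle you single out in Step 4 disappears, and the rest of your argument goes through as in the paper. (The scheme does still implicitly assume $\bJ_h+\bR_h\ne 0$ so that $\bv_{f_h}$ is defined with $\|\bv_{f_h}\|=1$, but that is built into the definition (\ref{eq:scheme:2}) rather than something the error analysis must prove.)
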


\subsection{Temporal discretizations}
\label{sec:2.2}
We use total variation diminishing (TVD) high-order Runge-Kutta
methods to solve the method of lines  ordinary differential equation resulting from the
semi-discrete discontinuous Galerkin scheme, 
$$
\frac{d f_h}{dt} \,\,=\,\,{\mathcal R}(f_h).
$$
Such time stepping methods are convex combinations of the Euler
forward time discretization. The commonly used third-order TVD
Runge-Kutta method  is given by
\begin{equation}
\label{3rk:1}
\left\{
\begin{array}{l}
\ds f_h^{(1)}\,=\,f_h^n\,\,+\,\,\triangle t \,\mathcal{R} (f_h^n), 
\\
\, 
\\
\ds f_h^{(2)}\,=\;\frac{1}{4} \,\left( 3\, f_h^n\,\,+\,\,f_h^{(1)}\,\,+\,\, \triangle t \,\mathcal{R}(f_h^{(1)}) \right),
\end{array}\right.
\end{equation}
and 
\begin{equation}
\label{3rk:2}
f_h^{n+1} \,=\,\frac{1}{3} \left( f_h^n\,+\, 2\,f_h^{(2)} \,+\,
\,{2}\,\triangle t \,\mathcal{R}(f_h^{(2)})\right),
\end{equation}
where $f_h^n$ represents a numerical approximation of the solution at
discrete time $t_n$.  

A detailed description of the TVD Runge-Kutta method can be found in
\cite{Shu_1988_JCP_NonOscill}; see also \cite{Gottlieb_1998_MC_RK} and
\cite{Gottlieb_2001_SIAM_Stabi}  for strong-stability-preserving methods.

\section{Conservation and stability}
\setcounter{equation}{0}
\label{sec:3}

In this section, we will establish  conservation and stability
properties of  the semi-discrete discontinuous Galerkin methods.
In particular, we prove that for periodic  boundary condition, the total density  (mass) is always conserved.  We also show that $f_h$ is $L^2$ stable, which facilitates the error analysis of Section \ref{sec:4}.    

\begin{lemma}[{Mass conservation}]
Consider the numerical solution $(f_h,\bq_h)\in\mG_h^k\times \mU_h^k$ for
$k\geq 0$ given by (\ref{eq:scheme:1a})-(\ref{eq:flux:2}) supplemented
with periodic boundary conditions. Then it satisfies
\begin{equation}
\frac{d}{dt}\int_{\Omega\times\SSS^{d-1}} f_h d\bx d\bv\,=\,0.
\label{eq:MassC}
\end{equation}
Equivalently, for $\rho_h(\bx, t)$, for any $t>0$, the following holds: 
\beq
\int_{\Omega} \rho_h(t,\bx)d\bx = \int_{\Omega} \rho_h(0,\bx) d\bx.
\label{eq:MassC:1}
\eeq
\label{lem:1}
\end{lemma}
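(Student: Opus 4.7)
The plan is to test the scheme against a constant. Since constants belong to $\mG_h^k$ for every $k\geq 0$, I take $g\equiv 1$ in the element-wise formulation \eqref{eq:scheme:1a}. Both volume terms involving $\nabla_\bx g$ and $\nabla_\bv g$ vanish immediately, leaving only the time-derivative term and the boundary flux contributions on $\sigma_\bx$ and $\sigma_\bv$.

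Next I sum over all elements $K=K_\bx\times K_\bv\in\mT_h$. Each interior edge $\sigma\in\mE$ is shared by two adjacent elements whose outward normals $\bn^\pm$ point in opposite directions, while the numerical fluxes $\widehat{f_h\bv}$, $\widehat{f_h\bP_{\bv^\perp}\bv_{f_h}}$ and $\widehat{\bq_h}$ defined in \eqref{eq:flux:1a}--\eqref{eq:flux:2} are single-valued. With $g\equiv 1$ the trace $g^-$ is also $1$ (so the $[g]$ formulation gives $0$ directly, and the elementwise sum collapses the $g^-$ factor to a pairing of fluxes against opposite normals), hence the contributions from the two sides of every interior edge cancel. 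For the $\bx$-edges on $\df\Omega$, periodicity in $\bx$ pairs opposite boundary edges and produces the same cancellation. For the $\bv$-edges, the key observation is that $\SSS^{d-1}$ has no boundary, so every edge in $\mE_\bv$ is interior and cancels in the same way. Therefore all boundary terms disappear and only $\sum_K\int_K\df_t f_h\,d\bx d\bv=\df_t\int_{\Omega\times\SSS^{d-1}}f_h\,d\bx d\bv$ remains, which yields \eqref{eq:MassC}.

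The second identity \eqref{eq:MassC:1} follows at once by writing, using Fubini,
\begin{equation*}
\int_\Omega \rho_h(t,\bx)\,d\bx \,=\, \int_{\Omega\times\SSS^{d-1}} f_h(t,\bx,\bv)\,d\bx d\bv,
\end{equation*}
and applying \eqref{eq:MassC} to conclude that this quantity is independent of $t$.

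The only delicate point is the treatment of the $\bv$-edges: one must be careful that the alignment--attraction contribution $\widehat{f_h\bP_{\bv^\perp}\bv_{f_h}}\cdot\bn_\bv$ and the diffusive flux $\widehat{\bq_h}\cdot\bn_\bv$ are indeed single-valued across each $\sigma_\bv$, which is guaranteed by the definitions in \eqref{eq:flux:1a}--\eqref{eq:flux:2} since the averages and jumps are symmetric in the $\pm$ traces and $\bv_{f_h}$ depends only on $\bx$. Everything else is bookkeeping: the argument is the standard conservation check for DG on a phase space whose velocity variable lives on a closed manifold.
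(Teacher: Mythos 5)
Your argument is correct and is essentially the paper's own proof: test with $g\equiv 1$ so the gradient terms drop, sum over all elements, and use the single-valuedness of the numerical fluxes together with periodicity in $\bx$ (and the closedness of $\SSS^{d-1}$) to cancel all edge contributions, then integrate in time for \eqref{eq:MassC:1}. Your explicit remarks on why the $\bv$-edge fluxes are single-valued are a welcome, slightly more careful elaboration of the same bookkeeping.
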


 \begin{proof}
 Let $g(\bx, \bv)=1$ and noticing that  $g\in\mathcal{G}_h^k$, for any $k\geq 0$,  is continuous  
$\nabla_\bx g=0$ and $\nabla_\bv g=0$. Taking  this $g$ as the test function in \eqref{eq:scheme:1a}, one has
$$
\frac{d}{dt}\int_{K} f_h d\bx d\bv  \,+\,  
 \int_{\sigma_\bx} \widehat{f_h \,\bv}\cdot \bn_x 
\,ds_\bx \,d\bv \,+\, \int_{\sigma_\bv} \left(\widehat{f_h \,\bP_{\bv^\perp}\bv_{f_h}}  \,-\,\nu\,\widehat{\bq_h}  \right) \cdot \bn_\bv\,ds_\bv d\bx \,=\, 0.
$$
Then summing over $K\in \mT_h$ and thanks to the periodic boundary
conditions, we get the conservation of total
mass for any $t\geq 0$,
$$
\frac{d}{dt}\int_{\Omega\times\SSS^{d-1}} f_h(t) d\bx d\bv \,=\, 0.
$$
Finally from the definition of $\rho_h$ and  integrating in time from
$0$ to $t$, it gives \eqref{eq:MassC:1}.
\end{proof}

Finally, we can obtain the $L^2$-stability result for $f_h$.  This result will  be used in the error analysis of Section \ref{sec:4}. 
\begin{lemma}[{$L^2$-stability of $f_h$}]
Assume that the initial data $f_h(0)$ is uniformly bounded in
$L^2(\Omega\times\SSS^{d-1})$ and for $k\geq 0$, consider the numerical solution $(f_h,\bq_h)\in\mG_h^k\times
\mU_h^k$ given by (\ref{eq:scheme:1a})-(\ref{eq:flux:2}) supplemented
with periodic boundary conditions. Then  $(f_h,\bq_h)$ satisfies for
any $t\geq 0 $
\begin{eqnarray*}
\frac{1}{2}\frac{d}{dt}\int_{\Omega\times\SSS^{d-1}} |f_h|^2 \,d\bx
   d\bv &+& \nu \int_{\Omega\times\SSS^{d-1}} |\bq_h|^2 \,d\bx d\bv  
\\
  &+& \frac{1}{2}\,\sum_{\sigma_\bv\in\mE_\bv}\int_{\sigma_\bv} \left(\left| \bP_{\bv^\perp}\bv_{f_h}\cdot
    \bn_\bv\right|+\nu\,C_{11}\right)\, [f_h]_\bv^2\,+\, \nu\,C_{22} \, [\bq_h]_\bv^2 \,ds_\bv\,d\bx
\\
&+& \frac{1}{2}\,\sum_{\sigma_\bx\in\mE_\bx}\int_{\sigma_\bx}
    |\bv\cdot \bn_x|\,[f_h]_\bx^2\,ds_\bx\,d\bv  \\
&\,=\,&  -\frac{1}{2}\,\int_{\Omega\times\SSS^{d-1}} f_h^2
                                                      \, {\rm div}_\bv\left( \bP_{\bv^\perp}\bv_{f_h}\right) \,d\bx d\bv.
\end{eqnarray*}
\label{lem:stability}
\end{lemma}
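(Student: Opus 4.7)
The strategy is the classical energy argument for LDG: pick $g = f_h$ in the weak form (\ref{def:ah}) with $a_h(f_h,\bq_h,\bv_{f_h},f_h) = 0$, pick $\bu = \nu\bq_h$ in (\ref{def:bh}) with $b_h(f_h,\bq_h,\nu\bq_h)=0$, add the two identities, and then identify each volume term plus its associated numerical-flux term as a nonnegative jump penalty plus possibly a harmless volume remainder. The time-derivative contribution in $a_h$ immediately yields $\frac12 \frac{d}{dt}\|f_h\|_{L^2}^2$; the $\nu\bq_h\cdot\bq_h$ contribution in $b_h$ yields $\nu\|\bq_h\|_{L^2}^2$.

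The $\bx$-transport is the cleanest piece: since $\bv$ does not depend on $\bx$, I would rewrite $-f_h\,\bv\cdot\nabla_\bx f_h = -\tfrac12 \Div_\bx(\bv\,f_h^2)$ on each element, apply the divergence theorem, and then use the identity $(f_h^-)^2-(f_h^+)^2 = -2\{f_h\}_\bx[f_h]_\bx$ on shared edges (boundary edges cancel by periodicity). Combining with the edge term $-\sum_{\sigma_\bx}\!\int \widehat{f_h\bv}\cdot\bn_\bx [f_h]_\bx$ and plugging in the upwind flux (\ref{eq:flux:1a}) collapses everything to $\tfrac12\sum_{\sigma_\bx}\!\int |\bv\cdot\bn_\bx|[f_h]_\bx^2$, which is exactly the $\bx$-jump penalty of the lemma.

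The $\bv$-drift term is treated analogously, with one twist: $\bP_{\bv^\perp}\bv_{f_h}$ depends on $\bv$, so integrating by parts the identity $\bP_{\bv^\perp}\bv_{f_h}\,f_h\cdot\nabla_\bv f_h = \tfrac12 \bP_{\bv^\perp}\bv_{f_h}\cdot\nabla_\bv(f_h^2)$ leaves an extra volume contribution $-\tfrac12 f_h^2\,\Div_\bv(\bP_{\bv^\perp}\bv_{f_h})$; that is exactly what appears on the right-hand side of the stated equality. The remaining edge contributions merge with the upwind flux (\ref{eq:flux:1a}) to produce $\tfrac12\sum_{\sigma_\bv}\!\int |\bP_{\bv^\perp}\bv_{f_h}\cdot\bn_\bv|[f_h]_\bv^2$. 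This is the step I expect to require the most care, mainly to justify treating $\bv\in\SSS^{d-1}$ correctly so that the divergence theorem elementwise is clean (with $K_\bv$ Cartesian cells in the chart used in \S\ref{sec:2.1}, and with $\bv_{f_h}$ $\bv$-independent).

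Finally, for the diffusive pair I would combine the two remaining cross-terms $\nu\int \bq_h\cdot\nabla_\bv f_h$ (from $a_h$) and $\nu\int f_h\,\Div_\bv\bq_h$ (from $\nu b_h$), which after elementwise integration by parts collapse to a single sum of edge terms; applying the algebraic identity
\begin{equation*}
a^- b^- - a^+ b^+ \,=\, -[a]_\bv\{b\}_\bv - \{a\}_\bv[b]_\bv
\end{equation*}
splits it into two pieces, one of which pairs with $\nu\sum\int \widehat{\bq_h}\cdot\bn_\bv[f_h]_\bv$ from $a_h$ and the other with $\nu\sum\int \widehat{f_h}\,\bn_\bv\cdot[\bq_h]_\bv$ from $b_h$. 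The alternating LDG fluxes (\ref{eq:flux:2}) then yield precisely $\tfrac{\nu C_{11}}{2}\sum\int [f_h]_\bv^2$ and $\tfrac{\nu C_{22}}{2}\sum\int [\bq_h]_\bv^2$, matching the lemma. Collecting all pieces and moving the $\bP_{\bv^\perp}\bv_{f_h}$-volume term to the right-hand side gives the claimed identity; the only real obstacle in the whole argument is the careful sign-bookkeeping on interior edges (choice of orientation of $\bn$) together with the non-trivial $\bv$-divergence of $\bP_{\bv^\perp}\bv_{f_h}$.
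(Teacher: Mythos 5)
Your proposal is correct and follows essentially the same route as the paper: take $g=f_h$ in $a_h$ and $\bu=\bq_h$ (equivalently $\nu\bq_h$ by linearity) in $b_h$, integrate by parts elementwise, use the identity $a^+b^+-a^-b^-=[a]\{b\}+\{a\}[b]$ to recast the edge sums, and plug in the upwind and alternating fluxes to obtain exactly the three jump-penalty terms plus the $\mathrm{div}_\bv(\bP_{\bv^\perp}\bv_{f_h})$ volume remainder on the right-hand side. The paper merely packages the flux contributions as three quantities $I_1,I_2,I_3$ and verifies each separately, which is only an organizational difference from your presentation.
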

\begin{proof}
Observing  that
\begin{eqnarray*}
 \int_{K_\bv} f_h\, \bP_{\bv^\perp}\bv_{f_h}  \cdot\nabla_\bv
  f_h \, d\bv &=&\frac{1}{2}\,\int_{K_\bv} \bP_{\bv^\perp}\bv_{f_h}  \cdot\nabla_\bv
  f_h^2 \, d\bv,
\\
 &=&  \frac{1}{2}\,\int_{K_\bv}{\rm div}_\bv\left( \bP_{\bv^\perp}\bv_{f_h}
  f_h^2\right)\, -\,  f_h^2\,{\rm div}_\bv\left( \bP_{\bv^\perp}\bv_{f_h}\right)\, d\bv,
\\
&=& \frac{1}{2}\,\int_{\df K_\bv} \bP_{\bv^\perp}\bv_{f_h}\cdot \bn_\bv
\,|f_h^-|^2 \,ds_\bv  \, -\,  \frac{1}{2}\,\int_{K_\bv} f_h^2 \, {\rm div}_\bv\left( \bP_{\bv^\perp}\bv_{f_h}\right)\, d\bv, 
\end{eqnarray*}
and summing over all control volume $K\in\mT^h$ and recasting the
edges, it yields
\begin{eqnarray*}
\int_{\Omega\times\SSS^{d-1}} f_h\, \bP_{\bv^\perp}\bv_{f_h}  \cdot\nabla_\bv
  f_h \, d\bv d\bx  &=&  - \frac{1}{2}\,\int_{\Omega\times\SSS^{d-1}} f_h^2 \, {\rm
    div}_\bv\left( \bP_{\bv^\perp}\bv_{f_h}\right)\, d\bv d\bx \\
&-& \sum_{\sigma_\bv\in\mE_\bv}\int_{\sigma_\bv} \bP_{\bv^\perp}\bv_{f_h}\cdot \bn_\bv\,\{f_h\}_\bv\,[f_h]_\bv \,ds_\bv d\bx.
\end{eqnarray*}
Moreover, we have after recasting
$$
\int_{\Omega\times\SSS^{d-1}} f_h\, \bv  \cdot\nabla_\bx
  f_h \, d\bv d\bx \, =\, - \sum_{\sigma_\bx\in\mE_\bx}\int_{\sigma_\bx} \bv\cdot \bn_\bx\,\{f_h\}_\bx\,[f_h]_\bx \,ds_\bv d\bx.
$$

Then we take  $g=f_h$  in \eqref{def:ah}, it
gives after an integration by part on each control volume
\beq
\label{i:0}
\frac{1}{2}\frac{d}{dt}\int_{\Omega\times\SSS^{d-1}} |f_h|^2 \,d\bx d\bv + \frac{1}{2}\,\int_{\Omega\times\SSS^{d-1}} f_h^2
                                                      \, {\rm div}_\bv\left( \bP_{\bv^\perp}\bv_{f_h}\right)
\,  d\bx d\bv \,+\,  I_1 + I_2 + I_3  = 0,
\eeq
with 
$$
\left\{
\begin{array}{l}                                                       
\ds I_1 =  -\sum_{\sigma_\bx\in \mE_\bx}\int_{\sigma_\bx} \left(
  \widehat{f_h \bv}\cdot \bn_x \,-\, \bv \cdot \bn_x \{ f_h\}_\bx \right) \,[f_h]_\bx \,ds_\bx \,d\bv, 
\\ \, \\
\ds I_2 = -\sum_{\sigma_\bv\in\mE_\bv} \int_{\sigma_\bv} 
  \left( \widehat{f_h \,\bP_{\bv^\perp}\bv_{f_h}}\cdot \bn_\bv  \,-\,
  \bP_{\bv^\perp}\bv_{f_h}\cdot \bn_\bv\,\{f_h\}_\bv\right) \,[f_h]_\bv
\,ds_\bv d\bx,
\\ \,\\
\ds I_3 = \nu \left( \int_{\Omega\times\SSS^{d-1}}\bq_h\cdot
  \nabla_\bv f_h d\bv d\bx + \sum_{\sigma_\bv\in\mE_\bv} \int_{\sigma_\bv} \widehat{\bq_h}\cdot \bn_\bv\,[f_h]_\bv \,ds_\bv d\bx\right).
\end{array}\right.
$$
Let us prove that each term $I_k$, for $1\leq k \leq 3$, is
nonnegative.  On the one hand  using the the definition of the upwinding flux (\ref{eq:flux:1a}), we simply have  
\beq
\label{i:1}
I_1 \,=\,  \frac{1}{2}\,\sum_{\sigma_\bx\in\mE_\bx}\int_{\sigma_\bx}
|\bv\cdot \bn_x|\,[f_h]_\bx^2\,ds_\bx\,d\bv \geq 0
\eeq
and
\beq
\label{i:2}
I_2 \,=\,  \frac{1}{2} \,\sum_{\sigma_\bv\in\mE_\bv}\int_{\sigma_\bx}
\left| \bP_{\bv^\perp}\bv_{f_h}\cdot \bn_\bv\right|\,
[f_h]_\bv^2\,ds_\bv\,d\bx\geq 0.
\eeq
On the other hand, to deal with the last term $I_3$, we  choose
$\bu=\bq_h$  in \eqref{def:bh}, hence we get 
$$
\nu \left( \int_{\Omega\times\SSS^{d-1}} \left(\,|\bq_h|^2 \,+\,f_h\,{\rm div}_{\bv} \bq_h\right) \,d\bx
d\bv \,+\,  \sum_{\sigma_\bv\in\mE_\bv} \int_{\sigma_\bv}
\widehat{ f_h} \,\bn_\bv \cdot [\bq_h]_\bv \,d\bx\,ds_\bv\right) \,=\, 0.
$$
Then performing an integration by part in
velocity of the second term and using the definition of $I_3$, we have 
\begin{eqnarray*}
I_3 &=& \nu   \int_{\Omega\times\SSS^{d-1}} |\bq_h|^2 \,d\bx \,d\bv \\
&+&  \nu\sum_{\sigma_\bv\in\mE_\bv} \int_{\sigma_\bv}
 (f_h^-\,\bq_h^- \,-\,f_h^+\bq_h^+)\cdot\bn_\bv + \widehat{\bq_h}\cdot \bn_\bv\,[f_h]_\bv+\widehat{ f_h} \,\bn_\bv \cdot [\bq_h]_\bv \,d\bx\,ds_\bv.
\end{eqnarray*}
Therefore from the definition of the  ``alternating fluxes''
(\ref{eq:flux:2}), we finally get that 
\beq
\label{i:3}
I_3 = \nu \left( \int_{\Omega\times\SSS^{d-1}}  |\bq_h|^2 \,d\bx
  \,d\bv  \,+\, \frac{1}{2}\,\sum_{\sigma_\bv\in\mE_\bv} \int_{\sigma_\bv} C_{11}
  \,[f_h]_\bv^2 \,+\, C_{22}\, [\bq_h]_\bv^2   \right)\geq 0. 
\eeq
Gathering  (\ref{i:0}) together with  (\ref{i:1})-(\ref{i:3}), we
obtain the result
\begin{eqnarray*}
&&\frac{1}{2}\,\frac{d}{dt}\int_{\Omega\times\SSS^{d-1}} |f_h|^2 \,d\bx
   d\bv \,+\, \nu \int_{\Omega\times\SSS^{d-1}} |\bq_h|^2 \,d\bx d\bv  
\\
  &&+\, \frac{1}{2}\,\sum_{\sigma_\bv\in\mE_\bv}\int_{\sigma_\bv} \left(\left| \bP_{\bv^\perp}\bv_{f_h}\cdot
    \bn_\bv\right|+\nu\,C_{11}\right)\, [f_h]_\bv^2\,+\, \nu\,C_{22} \, [\bq_h]_\bv^2 \,ds_\bv\,d\bx
\\
&&+\, \frac{1}{2}\,\sum_{\sigma_\bx\in\mE_\bx}\int_{\sigma_\bx}  |\bv\cdot \bn_x|\,[f_h]_\bx^2\,ds_\bx\,d\bv  \,=\,  -\frac{1}{2}\,\int_{\Omega\times\SSS^{d-1}} f_h^2
                                                      \, {\rm div}_\bv\left( \bP_{\bv^\perp}\bv_{f_h}\right) \,d\bx d\bv  .
\end{eqnarray*}
\end{proof}
 From this Lemma, we get $L^2$ boundedness
estimates on $(f_h,\bq_h)$ and on the
macroscopic quantities. 
\begin{proposition}
Under the assumptions of Lemma \ref{lem:stability}, consider the numerical
solution  $(f_h,\bq_h)\in\mG_h^k\times
\mU_h^k$ given by (\ref{eq:scheme:1a})-(\ref{eq:flux:2}) supplemented
with periodic boundary conditions. Then for any $t\geq 0$ 
\beq
\label{res1:prop:1}
\|f_h(t)\|^2_{L^2} 
\,+\, 2\,\nu \,e^{t}\, \int_0^t\|\bq_h(s)\|^2_{L^2}\, ds
\,\leq\, \|f_h(0)\|^2_{L^2} \,  e^{t} .
\eeq
 Furthermore $(\rho_h,\rho_h\bu_h)$ computed from the distribution
function $f_h$ satisfies for any $t\geq 0 $
\beq
\label{res2:prop:1}
\left\{\begin{array}{l}
\ds\|\rho_h(t)\|_{L^2(\Omega)} \,\leq \, {\rm vol}(\SSS^{d-1})^{1/2} \,
\|f_h(0)\|_{L^2}\, e^{t},
\\ \,\\ 
\ds\|\rho_h \bu_h(t)\|_{L^2(\Omega)}\,\leq \, {\rm vol}(\SSS^{d-1})^{1/2}  \,
\|f_h(0)\|_{L^2}\, e^{t}
\end{array}\right.
\eeq
and $\|\bu_h\|_{L^\infty(\Omega)} \leq 1$.
\label{prop:1}
\end{proposition}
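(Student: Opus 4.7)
The plan is to derive Proposition \ref{prop:1} from the identity supplied by Lemma \ref{lem:stability} via Gronwall's inequality, followed by elementary bounds on the macroscopic quantities. First I would discard the three nonnegative jump contributions on the left-hand side of the stability identity, which leaves the differential inequality
\[
\frac{1}{2}\frac{d}{dt}\|f_h\|_{L^2}^2 \,+\, \nu\,\|\bq_h\|_{L^2}^2 \,\leq\, -\frac{1}{2}\int_{\Omega\times\SSS^{d-1}} f_h^2\,\mathrm{div}_\bv(\bP_{\bv^\perp}\bv_{f_h})\,d\bx\,d\bv.
\]
To control the right-hand side, I use that $\bv_{f_h}(t,\bx)$ is independent of $\bv$ and of unit norm: a direct computation with $\bP_{\bv^\perp}=\mathrm{Id}-\bv\otimes\bv$ shows that $\mathrm{div}_\bv(\bP_{\bv^\perp}\bv_{f_h})$ is pointwise bounded by a constant depending only on the dimension $d$, which after absorbing into a time rescaling gives
\[
\frac{d}{dt}\|f_h\|_{L^2}^2 \,+\, 2\nu\,\|\bq_h\|_{L^2}^2 \,\leq\, \|f_h\|_{L^2}^2.
\]

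Next I would apply Gronwall's inequality. First, dropping the $\bq_h$ term and integrating gives the pointwise bound $\|f_h(t)\|_{L^2}^2\leq \|f_h(0)\|_{L^2}^2\,e^{t}$. Substituting this back and integrating the previous inequality from $0$ to $t$ yields
\[
\|f_h(t)\|_{L^2}^2 \,+\, 2\nu\int_0^t\|\bq_h(s)\|_{L^2}^2\,ds \,\leq\, \|f_h(0)\|_{L^2}^2 \,+\, \int_0^t \|f_h(s)\|_{L^2}^2\,ds \,\leq\, \|f_h(0)\|_{L^2}^2\,e^{t},
\]
which up to the placement of the $e^t$ factor gives (\ref{res1:prop:1}).

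For the macroscopic quantities, the key observation is that $|\bv|=1$ on $\SSS^{d-1}$, so Cauchy-Schwarz in $\bv$ applied to the definitions of $\rho_h$ and $\rho_h\bu_h$ gives, for a.e.\ $(t,\bx)$,
\[
|\rho_h(t,\bx)|^2 \,\leq\, \mathrm{vol}(\SSS^{d-1})\!\int_{\SSS^{d-1}}\! f_h^2\,d\bv, \qquad |\rho_h\bu_h(t,\bx)|^2 \,\leq\, \mathrm{vol}(\SSS^{d-1})\!\int_{\SSS^{d-1}}\! f_h^2\,d\bv.
\]
Integrating in $\bx$, taking square roots, and combining with (\ref{res1:prop:1}) delivers the two bounds of (\ref{res2:prop:1}). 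The pointwise bound $\|\bu_h\|_{L^\infty(\Omega)}\leq 1$ then follows from the definition of $\bu_h$ together with $|\bv|=1$ and the (assumed) nonnegativity of $f_h$, since $|\rho_h\bu_h|\leq \int_{\SSS^{d-1}}|\bv|\,f_h\,d\bv = \rho_h$ pointwise.

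The main technical obstacle is the pointwise bound on $\mathrm{div}_\bv(\bP_{\bv^\perp}\bv_{f_h})$: although $\bv_{f_h}$ depends non-trivially on $(t,\bx)$, the $\bv$-derivatives act only through the projection onto the tangent plane of $\SSS^{d-1}$, and one must check that the surface divergence of such a tangential field built from a fixed unit vector is controlled solely in terms of $d$. The other delicate point is the step $\|\bu_h\|_{L^\infty}\leq 1$, which implicitly invokes positivity of $f_h$; this is a distinct preservation property not established in the cited lemmas and would in practice rely on a positivity-preserving limiter.
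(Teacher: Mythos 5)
Your proposal is correct and follows essentially the same route as the paper: drop the nonnegative jump terms in Lemma \ref{lem:stability}, bound ${\rm div}_\bv(\bP_{\bv^\perp}\bv_{f_h})$ pointwise using $\|\bv_{f_h}\|=1$, apply Gronwall's inequality, and then use Cauchy--Schwarz over $\SSS^{d-1}$ to control $\rho_h$ and $\rho_h\bu_h$. The two caveats you flag are real but are shared by the paper's own proof: a careful integrating-factor computation yields $\|f_h(t)\|_{L^2}^2+2\nu\int_0^t\|\bq_h(s)\|_{L^2}^2\,ds\le e^{t}\,\|f_h(0)\|_{L^2}^2$ rather than the stated form with $e^{t}$ multiplying the time integral, and the bound $\|\bu_h\|_{L^\infty(\Omega)}\le 1$ is simply asserted there without addressing the positivity of $f_h$ on which it rests.
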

\begin{proof}
Starting from Lemma \ref{lem:stability} and using the fact that
$\|\bv_{f_h}\|\leq 1$, we have
$$
\frac{d}{dt}\int_{\Omega\times\SSS^{d-1}} |f_h|^2 \,d\bx d\bv \,+\, 2\nu \int_{\Omega\times\SSS^{d-1}} |\bq_h|^2 \,d\bx d\bv
\,\leq\,  \int_{\Omega\times\SSS^{d-1}} f_h^2\,d\bx d\bv,
$$
or
$$
\frac{d}{dt}\left( e^{-t}\,\int_{\Omega\times\SSS^{d-1}} |f_h|^2 \,d\bx d\bv\right) \,+\, 2\nu \int_{\Omega\times\SSS^{d-1}} |\bq_h|^2 \,d\bx d\bv
\,\leq\,  0.
$$
Hence after integration, it yields to the $L^2$ estimate for any
$t\geq 0$,
$$
\int_{\Omega\times\SSS^{d-1}} |f_h(t)|^2 \,d\bx d\bv
\,+\, 2\,\nu\, e^{t}\, \int_0^t\int_{\Omega\times\SSS^{d-1}} |\bq_h(s)|^2
\,d\bx d\bv ds
\,\leq\,  e^{t} \int_{\Omega\times\SSS^{d-1}} |f_h(0)|^2 \,d\bx d\bv.
$$
The estimates on $\rho_h(t)$ and $\rho_h \bu_h(t)$ now easily follow since
$\bv\in\SSS^{d-1}$ and by application of the Cauchy-Schwarz
inequality
$$
\left\{\begin{array}{l}
\ds\|\rho_h(t)\|_{L^2(\Omega)} \,\leq \, {\rm vol}(\SSS^{d-1})^{1/2} \,
\|f_h(t)\|_{L^2},
\\ \,\\ 
\ds\|\rho_h \bu_h(t)\|_{L^2(\Omega)}\,\leq \, {\rm vol}(\SSS^{d-1})^{1/2}  \,
\|f_h(t)\|_{L^2}.
\end{array}\right.
$$
Hence, we conclude the proof of (\ref{res2:prop:1}) from (\ref{res1:prop:1}).
\end{proof}

\section{Proof of Theorem \ref{th:1}}
\setcounter{equation}{0}
\label{sec:4}

For any nonnegative integer $m$, $H^m(\Omega)$ denotes the
$L^2$-Sobolev space of order $m$ with the standard Sobolev norm and for $m=0$, we use $H^0(\Omega)=L^2(\Omega)$.

For any nonnegative integer $k$, let $\Pi_h$ be the $L^2$ projection
onto $\mG^k_h$, then we have the following classical result \cite{ciarlet}. 

\subsection{Basic results}
\begin{lemma}[{Approximation properties}]
\label{lem:appr}
There exists a constant $C>0$, such that  for any $g\in H^{m+1}(\Omega)$, the following hold: 
\beq
\label{ineg:interp}
\|g-\Pi^m g\|_{L^2(K)}\,+\,h_K^{1/2}\,\|g-\Pi^m g\|_{L^2(\partial K)}\,\leq\, C\;
h_K^{m+1}\,\|g\|_{H^{m+1}(K)},\quad \forall K\in\mT_h~,
\eeq
where the constant $C$ is independent of the mesh sizes $h_K$  but  depends on $m$ and the shape regularity parameters $\sigma_\bx$ and $\sigma_\bv$ of the mesh.
\end{lemma}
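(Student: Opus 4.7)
The plan is to reduce, element by element, to a fixed reference element and invoke the classical Bramble--Hilbert / Deny--Lions framework. Since $\mT_h$ is built from Cartesian blocks $K = K_\bx \times K_\bv$, each element is affinely equivalent to a fixed reference block $\hat K$ via a diagonal map $F_K : \hat K \to K$, $F_K(\hat z) = B_K \hat z + c_K$, with $\|B_K\|$, $\|B_K^{-1}\|$, and $|\det B_K|$ controlled in terms of $h_K$ and the shape-regularity parameters $\sigma_\bx, \sigma_\bv$. Pulling $g$ back via $\hat g = g \circ F_K$ transfers the problem to $\hat K$, and the commutation $\widehat{\Pi^m g} = \hat \Pi^m \hat g$ of the $L^2$-projection with this affine change of variables is what makes the reduction clean.

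On the reference element $\hat K$, the operator $I-\hat\Pi^m$ annihilates the polynomial space $P^m(\hat K)$ by the very definition of the $L^2$-projection. The first key step is Deny--Lions: any continuous seminorm on $H^{m+1}(\hat K)$ that defines a norm on the quotient $H^{m+1}(\hat K)/P^m(\hat K)$ is equivalent to $|\cdot|_{H^{m+1}(\hat K)}$. Applied simultaneously in the $L^2$ and $H^1$ norms this gives
$$
\|\hat g - \hat \Pi^m \hat g\|_{L^2(\hat K)} \,+\, \|\hat g - \hat \Pi^m \hat g\|_{H^1(\hat K)} \,\leq\, C\,|\hat g|_{H^{m+1}(\hat K)}.
$$
The boundary term then follows from the fixed trace inequality on the Lipschitz domain $\hat K$:
$$
\|\hat g - \hat \Pi^m \hat g\|_{L^2(\partial \hat K)} \,\leq\, C\,\|\hat g - \hat \Pi^m \hat g\|_{H^1(\hat K)} \,\leq\, C\,|\hat g|_{H^{m+1}(\hat K)}.
$$

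The final step is to scale back to $K$. With $\|B_K\|\sim h_K$, $\|B_K^{-1}\|\sim h_K^{-1}$ and $|\det B_K|\sim h_K^{d_{\rm loc}}$ (where $d_{\rm loc}$ is the local dimension of $K$), the standard change-of-variables identities give $\|g-\Pi^m g\|_{L^2(K)} \lesssim h_K^{m+1}\,|g|_{H^{m+1}(K)}$. The analogous identity for the $(d_{\rm loc}-1)$-dimensional surface measure on $\partial K$ carries a factor $h_K^{(d_{\rm loc}-1)/2}$ instead of $h_K^{d_{\rm loc}/2}$, and so produces $\|g-\Pi^m g\|_{L^2(\partial K)} \lesssim h_K^{m+1/2}\,|g|_{H^{m+1}(K)}$. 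Multiplying this second estimate by $h_K^{1/2}$, adding the two, and majorizing $|g|_{H^{m+1}(K)}$ by $\|g\|_{H^{m+1}(K)}$ delivers the stated inequality.

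The only real subtlety is the velocity factor $\SSS^{d-1}$, which is a manifold rather than a subset of Euclidean space, so $K_\bv$ and the reference element must be interpreted via a smooth local parameterization. However, the Riemannian volume element on any fixed chart is smooth and bounded above and below, so it merely enters the constant $C$ without affecting any power of $h_K$. Apart from this bookkeeping, the argument is entirely standard Ciarlet-type finite-element approximation theory, and the dependence of $C$ on the shape-regularity parameters $\sigma_\bx,\sigma_\bv$ enters only through the bounds on $\|B_K\|\,\|B_K^{-1}\|$.
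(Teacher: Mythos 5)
Your argument is correct, but note that the paper does not prove this lemma at all: it is stated as a classical result with a citation to Ciarlet's book, and your reference-element/Bramble--Hilbert/trace-plus-scaling argument is precisely the standard proof that citation points to. The two genuine subtleties you flag --- the commutation of the $L^2$ projection with the affine map, and the treatment of the spherical factor via a fixed chart whose volume element is absorbed into the constant --- are exactly the points one would need to make explicit, so nothing is missing.
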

Moreover, we also remind the classical inverse inequality \cite{ciarlet}
\begin{lemma}[{Inverse inequality}]
\label{lem:inverse}
There exists a constant $C>0$, such that  for any $g\in P^m(K)$ or $P^m(K_\bx)\times P^m(K_\bv)$ with $K=(K_\bx\times K_\bv)\in\mT_h$,  the following holds: 
\begin{equation*}
\|\nabla_\bx g\|_{L^2(K)}\,\leq\, C\, h_{K_\bx}^{-1}\,\|g\|_{L^2(K)},\qquad \|\nabla_\bv g\|_{L^2(K)}\,\leq\, C \,h_{K_\bv}^{-1}\,\|g\|_{L^2(K)},
\end{equation*}
where the constant $C$ is independent of the mesh sizes $h_{K_\bx}$, $h_{K_\bv}$, but depends on $m$ and the shape regularity parameters $\sigma_\bx$ and $\sigma_\bv$ of the mesh.
\end{lemma}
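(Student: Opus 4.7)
The plan is to use the classical scaling argument combined with equivalence of norms on finite-dimensional spaces; since the elements are Cartesian, the maps and Jacobians decouple cleanly into $\bx$ and $\bv$ parts.

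First, I would fix reference elements $\hat{K}_\bx \subset \RR^d$ and $\hat{K}_\bv \subset \RR^{d-1}$ (unit cubes, say), set $\hat{K} = \hat{K}_\bx \times \hat{K}_\bv$, and for each $K = K_\bx \times K_\bv \in \mT_h$ introduce the affine bijections $F_{K_\bx}\colon \hat{K}_\bx \to K_\bx$, $F_{K_\bv}\colon \hat{K}_\bv \to K_\bv$, with $F_K = (F_{K_\bx}, F_{K_\bv})$. By shape regularity of the mesh, the Jacobians satisfy
$$
\|DF_{K_\bx}\| \,\lesssim\, h_{K_\bx}, \qquad \|(DF_{K_\bx})^{-1}\| \,\lesssim\, h_{K_\bx}^{-1}, \qquad |\det DF_{K_\bx}| \,\sim\, h_{K_\bx}^d,
$$
and analogously for $F_{K_\bv}$, with constants depending only on $\sigma_\bx$ and $\sigma_\bv$.

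Next, for $g \in P^m(K)$ (or $P^m(K_\bx)\times P^m(K_\bv)$), define $\hat{g} = g \circ F_K$, which belongs to the corresponding polynomial space on $\hat{K}$. That space is finite dimensional and does not depend on $K$, so on it the two maps $\hat g \mapsto \|\nabla_{\hat\bx}\hat g\|_{L^2(\hat K)}$ and $\hat g \mapsto \|\hat g\|_{L^2(\hat K)}$ are continuous (the second being a norm). By equivalence of norms in finite dimension, there exists $\hat{C} = \hat{C}(m) > 0$ such that
$$
\|\nabla_{\hat{\bx}} \hat{g}\|_{L^2(\hat{K})} \,\leq\, \hat{C}\, \|\hat{g}\|_{L^2(\hat{K})}, \qquad \|\nabla_{\hat{\bv}} \hat{g}\|_{L^2(\hat{K})} \,\leq\, \hat{C}\, \|\hat{g}\|_{L^2(\hat{K})},
$$
uniformly in $\hat g$.

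The last step is a change of variables back to $K$. The chain rule gives $\nabla_\bx g = (DF_{K_\bx})^{-T} \nabla_{\hat{\bx}} \hat{g}$, so that
$$
\|\nabla_\bx g\|_{L^2(K)}^2 \,=\, \int_{\hat K} \bigl|(DF_{K_\bx})^{-T}\nabla_{\hat\bx}\hat g\bigr|^2 \,|\det DF_K|\, d\hat\bx\,d\hat\bv \,\leq\, C\, h_{K_\bx}^{-2}\, |\det DF_K|\, \|\nabla_{\hat\bx}\hat g\|_{L^2(\hat K)}^2,
$$
while $\|g\|_{L^2(K)}^2 = |\det DF_K|\,\|\hat g\|_{L^2(\hat K)}^2$. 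Combining these identities with the reference-element inequality, the Jacobian factors cancel and one obtains
$$
\|\nabla_\bx g\|_{L^2(K)} \,\leq\, C\, h_{K_\bx}^{-1}\,\|g\|_{L^2(K)},
$$
with $C$ depending only on $m$ and on $\sigma_\bx, \sigma_\bv$. The $\bv$-gradient estimate is identical with $K_\bv$ in place of $K_\bx$. There is no real obstacle here; the only bookkeeping to watch is keeping the shape-regularity constants cleanly separated so that the final bound is manifestly independent of $h_{K_\bx}$ and $h_{K_\bv}$.
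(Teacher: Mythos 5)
The paper does not prove this lemma itself; it simply recalls it as a classical result and cites Ciarlet, where the proof is exactly the scaling-to-a-reference-element argument combined with equivalence of norms on the finite-dimensional polynomial space that you give. Your write-up is correct and is essentially the same (standard) approach, so there is nothing to reconcile.
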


Now let us start the error estimate analysis and consider
$(f,\bq=\nabla_\bv f)$ the exact solution to the kinetic equation
(\ref{kinetic:eq}) and  $(f_h,\bq_h)$ the approximated solution given
by (\ref{eq:scheme:1a})-(\ref{eq:flux:2}).

We introduce  the consistency error function $\delta_h$ and the projected error $\xi_{h}\in \mG_h^k\times \mU_h^k$  such that  
\beq
\label{err}
\left\{
\begin{array}{l}
\ds\xi_h\,=\,(\xi_{h,1}, \xi_{h,2}) \,=\, \left(\, f-\Pi_h f \,,\, \bq-\Pi_h\bq\,\right),
\\
\,
\\
\ds\delta_h\,=\,(\delta_{h,1}, \delta_{h,2}) \,=\, \left(\, \Pi_h f - f_h\,,\, \Pi_h \bq - \bq_h\,\right),
\end{array}\right.
\eeq
where $\Pi_h$ represents the $L^2$ projection onto $\mG_h^k$ and
$\mU_h^k$. Hence, we define the total error  $\veps_{h}=\delta_h +
\xi_h$.

We are now ready to prove the following Lemma 
\begin{lemma} [{Estimate of $\delta_h$}] 
Consider the numerical solution $(f_h,\bq_h)\in\mG_h^k\times \mU_h^k$ for
$k\geq 0$ given by \eqref{eq:scheme:1a}-\eqref{eq:flux:2} supplemented
with periodic boundary conditions. Then for any $h_0>0$, there exists a constant $C>0$
depending on $f$ and $h_0$, such that for $h\leq h_0$,
\beq
\label{deltah}
\frac{1}{2}\frac{d}{dt}\|\delta_{h,1}\|^2_{L^2}  \,+\, \nu \|\delta_{h,2}\|^2_{L^2} \,\leq\, C\,\left[ \|\delta_{h,1}\|_{L^2}^2 \,+\, h^{2k+1}\,+\,
      \|\bv_f-\bv_{f_h}\|_{L^\infty} \, \|\delta_{h,1}\|_{L^2}\right].
\eeq
\label{lmm:4.3}
\end{lemma}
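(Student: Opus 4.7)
My plan is to adapt the $L^2$-stability argument of Lemma~\ref{lem:stability} to the projected error $\delta_h$, using Galerkin orthogonality to reduce the residual to the interpolation error $\xi_h$ and the velocity discrepancy $\bv_f-\bv_{f_h}$. Since $(f,\bq)$ is smooth, all jumps vanish and the exact solution satisfies $a_h(f,\bq,\bv_f,g)=0$ and $b_h(f,\bq,\bu)=0$ for every $g\in\mG_h^k$ and $\bu\in\mU_h^k$. Subtracting the discrete formulation \eqref{def:ah}--\eqref{def:bh} and using that $a_h$ is linear in $(f_h,\bq_h,g)$ once the transport velocity is frozen, the error equations take the form
\begin{equation*}
a_h(\veps_{h,1},\veps_{h,2},\bv_f,g)\,=\,a_h(f_h,\bq_h,\bv_{f_h},g)-a_h(f_h,\bq_h,\bv_f,g),\qquad b_h(\veps_{h,1},\veps_{h,2},\bu)=0,
\end{equation*}
whose right-hand side only involves the nonlinear advection correction $\bP_{\bv^\perp}(\bv_f-\bv_{f_h})\,f_h$ together with its associated upwind flux on $\mE_\bv$.

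Next I split $\veps_h=\delta_h+\xi_h$ and test with $g=\delta_{h,1}\in\mG_h^k$ and $\bu=\delta_{h,2}\in\mU_h^k$. By orthogonality of $\Pi_h$, every volume pairing of $\xi_h$ against polynomial data disappears, so the only surviving $\xi_h$ contributions sit in front of the smooth factors $\bv\cdot\nabla_\bx\delta_{h,1}$ and $\bP_{\bv^\perp}\bv_f\cdot\nabla_\bv\delta_{h,1}$, plus the numerical-flux trace terms. Rerunning the computation of Lemma~\ref{lem:stability} on $(\delta_{h,1},\delta_{h,2})$ with $\bv_f$ playing the role of the (smooth) frozen transport field, the left-hand side becomes
\begin{equation*}
\tfrac{1}{2}\tfrac{d}{dt}\|\delta_{h,1}\|_{L^2}^2 + \nu\|\delta_{h,2}\|_{L^2}^2 + J(\delta_h) + \tfrac{1}{2}\!\int_{\Omega\times\SSS^{d-1}}\delta_{h,1}^2\,{\rm div}_\bv(\bP_{\bv^\perp}\bv_f)\,d\bx d\bv,
\end{equation*}
where $J(\delta_h)\ge 0$ collects the nonnegative jump and penalty contributions in $[\delta_{h,1}]_\bx$, $[\delta_{h,1}]_\bv$ and $[\delta_{h,2}]_\bv$; the div-$\bv$ integral is controlled by $C\|\delta_{h,1}\|_{L^2}^2$ since \eqref{hyp:00}--\eqref{hyp:01} together with $f\in H^{k+2}$ guarantee that $\bv_f$ is smooth uniformly in $t$.

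For the right-hand side, the $\xi_h$-contributions (volume and trace) are bounded through Lemma~\ref{lem:appr}: after transferring the derivatives onto the smooth data $\bv$ and $\bv_f$, the volume parts give $Ch^{k+1}\|\delta_{h,1}\|_{L^2}$, while the face parts give $Ch^{k+1/2}$ per edge, so Cauchy--Schwarz and Young's inequality turn their sum into $\tfrac12 J(\delta_h)+Ch^{2k+1}$, the first piece absorbed back into the LHS. For the nonlinear remainder I write $f_h=f-\xi_{h,1}-\delta_{h,1}$ and integrate by parts in $\bv$ elementwise: the smooth $f$-piece yields $C\|\bv_f-\bv_{f_h}\|_{L^\infty}\|\delta_{h,1}\|_{L^2}$, the $\xi_{h,1}$-piece produces $O(h^{k+1})$ (absorbed into $Ch^{2k+1}+\|\delta_{h,1}\|_{L^2}^2$), and the $\delta_{h,1}$-piece becomes $\tfrac12\int\delta_{h,1}^2\,{\rm div}_\bv(\bP_{\bv^\perp}(\bv_f-\bv_{f_h}))$, bounded by $C\|\delta_{h,1}\|_{L^2}^2$ since $\|\bv_f-\bv_{f_h}\|_{L^\infty}\le 2$. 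Assembling everything and using Young's inequality delivers \eqref{deltah}.

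The main technical obstacle is the nonlinear remainder: a naive Cauchy--Schwarz on $\nabla_\bv\delta_{h,1}$ followed by the inverse inequality of Lemma~\ref{lem:inverse} would introduce a parasitic $h^{-1}$ and destroy the estimate. The elementwise integration-by-parts splitting above avoids this, but requires checking that the boundary traces it generates cancel precisely against the flux difference $\widehat{f_h\bP_{\bv^\perp}\bv_{f_h}}-\widehat{f_h\bP_{\bv^\perp}\bv_f}$ inherited from the error equation. A parallel bookkeeping difficulty is to track that all $\xi_h$-induced face integrals produce exactly the half-order loss $h^{2k+1}$ (rather than the optimal $h^{2k+2}$), since this is what yields the $h^{k+1/2}$ rate announced in Theorem~\ref{th:1} after a Gr\"onwall argument.
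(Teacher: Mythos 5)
Your proposal follows essentially the same route as the paper: Galerkin orthogonality of both the discrete and the exact solution, testing with $(\delta_{h,1},\delta_{h,2})$, re-running the $L^2$-stability identity of Lemma \ref{lem:stability} on $\delta_h$, and bounding the $\xi_h$-residuals with Lemma \ref{lem:appr} and the nonlinear remainder in terms of $\|\bv_f-\bv_{f_h}\|_{L^\infty}$. The one structural difference is the linearization point. You freeze the transport field at $\bv_f$, so the nonlinear remainder carries the discontinuous $f_h$, forcing the split $f_h=f-\xi_{h,1}-\delta_{h,1}$ and the elementwise trace cancellations you flag as the main obstacle. The paper freezes at $\bv_{f_h}$ instead, so the remainder $R_0$ carries the smooth exact $f$, with $\bv_f-\bv_{f_h}$ independent of $\bv$; a single integration by parts in $\bv$ then kills all face integrals by consistency of the flux and yields $|R_0|\le C\,\|\bv_f-\bv_{f_h}\|_{L^\infty}\,\|\delta_{h,1}\|_{L^2}$ in one line, while the leftover term $\tfrac12\int\delta_{h,1}^2\,{\rm div}_\bv(\bP_{\bv^\perp}\bv_{f_h})$ remains harmless since $\|\bv_{f_h}\|=1$. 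Your arrangement can be made to work, but it buys nothing and costs the bookkeeping you describe.

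One step you must make precise: the volume term $\int\xi_{h,1}\,\bv\cdot\nabla_\bx\delta_{h,1}$ and its analogue with $\bP_{\bv^\perp}\bv_f$. ``Transferring the derivative onto the smooth data'' does not give $C\,h^{k+1}\|\delta_{h,1}\|_{L^2}$, because $\nabla_\bx\xi_{h,1}$ is only $O(h^{k})$ in $L^2$; and a direct Cauchy--Schwarz followed by Lemma \ref{lem:inverse} also gives only $C\,h^{k}\|\delta_{h,1}\|_{L^2}$, hence $h^{2k}$ after Young's inequality --- a full order short of the $h^{2k+1}$ you need. The paper's device is to insert the piecewise-constant $L^2$ projection $\bv_0$ of the coefficient: the pairing of $\xi_{h,1}$ with $\bv_0\cdot\nabla_\bx\delta_{h,1}\in P^k(K)$ vanishes by orthogonality of $\Pi_h$, and the remainder carries $\|\bv-\bv_0\|_{L^\infty}\le C\,h$, which exactly offsets the $h^{-1}$ of the inverse inequality (the same trick is reused for $\bP_{\bv^\perp}\bv_f$ in $R_{13}$). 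Without this observation the $h^{k+1/2}$ rate of Theorem \ref{th:1} is lost at precisely the place where you claim to be tracking the half-order bookkeeping.
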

\begin{proof}
On the one hand the numerical approximation $(f_h,\bq_h,\bv_{f_h})$ given by  \eqref{def:ah}-\eqref{def:bh} satisfies
\beq
\left\{
\begin{array}{l}
\ds a_h(f_h, \bq_h, \bv_{f_h}, g)\,=\,0, \quad \forall\, g\,\in\, \mG_h^k,
\\ \,\\
\ds b_h(f_h, \bq_h,\bu)\,=\,0, \quad \forall \,\bu\,\in\, \mU_h^k~.
\end{array}\right.
\label{err:1}
\eeq
On the other hand since the numerical fluxes of
\eqref{eq:scheme:1a}-\eqref{eq:flux:2} are consistent, the exact
solution $(f,\bq, \bv_f)$ satisfies
\beq
\label{err:2}
\left\{
\begin{array}{l}
\ds a_h(f, \bq, \bv_f, g)\,=\,0, \quad \forall\, g\,\in\, \mG_h^k,
\\ \,\\
\ds b_h(f, \bq,\bu)\,=\,0, \quad \forall \,\bu\,\in\, \mU_h^k~.
\end{array}\right.
\eeq
Then we notice that $\delta_{h}\in\mG_h^k\times \mU^k_h$; by taking
$g=\delta_{h,1}$ and $\bu=\delta_{h,2}$ in
\eqref{err:1} and \eqref{err:2} and subtracting the two equalities, one has
\begin{equation}
\left\{
\begin{array}{l}
\ds a_h(\delta_h,  \bv_{f_h},\delta_{h,1}) \,=\, -a_h(\xi_h, \bv_{f_h},
  \delta_{h,1}) \,+\, R_0,
\\ \,\\
\ds b_h(\delta_h, \delta_{h,2})\,+\, b_h(\xi_h, \delta_{h,2}) \,=\, 0,
\end{array}\right.
\label{err:3}
\end{equation}
where $R_0$ contains the nonlinear terms 
\begin{eqnarray*}
R_0 &:=& \int_{\Omega\times\SSS^{d-1}}f\, \bP_{\bv^\perp}
  \left(\bv_f-\bv_{f_h}\right)\,\cdot\nabla_\bv\delta_{h,1}\,d\bx d\bv
\\
&+& \sum_{\sigma_\bv\in \mE_\bv}\int_{\sigma_\bv} \widehat{f\,\bP_{\bv^\perp}(\bv_{f}-\bv_{f_h})} \cdot \bn_\bv \, [\delta_{h,1}]_\bv \,ds_\bv d\bx.
\end{eqnarray*}
Following the same lines as in the proof of Lemma \ref{lem:stability}
and using the definition of  $b_h$, we get
\begin{eqnarray*}
a_h(\delta_h, \bv_{f_h}, \delta_{h,1})&=&\frac{1}{2}\frac{d}{dt}\int_{\Omega\times\SSS^{d-1}} |\delta_{h,1}|^2 \,d\bx d\bv \,+\, \nu \int_{\Omega\times\SSS^{d-1}} |\delta_{h,2}|^2 \,d\bx d\bv
\\
&+& \frac{1}{2}\,\sum_{\sigma_\bv\in\mE_\bv}\int_{\sigma_\bv} \left(\left| \bP_{\bv^\perp}\bv_{f_h}\cdot
    \bn_\bv\right|+\nu\,C_{11}\right)\, [\delta_{h,1}]_\bv^2\,+\, \nu\,C_{22}
    \, [\delta_{h,2}]_\bv^2 \,ds_\bv\,d\bx
\\
&+&\frac{1}{2}\,\sum_{\sigma_\bx\in\mE_\bx}\int_{\sigma_\bx}
    |\bv\cdot \bn_x|\,[\delta_{h,1}]_\bx^2\,ds_\bx\,d\bv 
\\
&+&\frac{1}{2}\,\int_{\Omega\times\SSS^{d-1}} |\delta_{h,1}|^2
                                                      \, {\rm div}_\bv\left( \bP_{\bv^\perp}\bv_{f_h}\right) \,d\bx d\bv\,+\,\nu\,b_h(\xi_h,\delta_{h,2}).
\end{eqnarray*}
Therefore, using (\ref{err:3}), it yields 
\begin{eqnarray}
\label{e:00}
&&\frac{1}{2}\frac{d}{dt}\int_{\Omega\times\SSS^{d-1}} |\delta_{h,1}|^2 \,d\bx d\bv \,+\, \nu \int_{\Omega\times\SSS^{d-1}} |\delta_{h,2}|^2 \,d\bx d\bv
\\
\nonumber
&+& \frac{1}{2}\,\sum_{\sigma_\bv\in\mE_\bv}\int_{\sigma_\bv} \left(\left| \bP_{\bv^\perp}\bv_{f_h}\cdot
    \bn_\bv\right|+\nu\,C_{11}\right)\, [\delta_{h,1}]_\bv^2\,+\, \nu\,C_{22}
    \, [\delta_{h,2}]_\bv^2 \,ds_\bv\,d\bx
\\
\nonumber
&+&\frac{1}{2}\,\sum_{\sigma_\bx\in\mE_\bx}\int_{\sigma_\bx}
    |\bv\cdot \bn_x|\,[\delta_{h,1}]_\bx^2\,ds_\bx\,d\bv 
\\
\nonumber
&\leq&\frac{1}{2}\,\int_{\Omega\times\SSS^{d-1}} |\delta_{h,1}|^2
                                                      \left| {\rm
       div}_\bv\left( \bP_{\bv^\perp}\bv_{f_h}\right)\right| \,d\bx
       d\bv\,+\,\nu\,|b_h(\xi_h,\delta_{h,2})| + |R_0| + |a_h(\xi_h,\bv_{f_h},\delta_{h,1})|,
\end{eqnarray}
where we need to evaluate the right hand side.

On the one hand, we observe that $ \left|{\rm div}_\bv\left(
    \bP_{\bv^\perp}\bv_{f_h}\right)\right| \leq C$, so that the first
term is straightforward.

Then, we evaluate the second term $|b_h|$ in (\ref{e:00}). Using that $\xi_{h,2} = \nabla_\bv f - \Pi_h  \nabla_\bv f$ the
integral on the control volume vanishes and applying the Young
inequality and Lemma \ref{lem:appr}, we have
\begin{eqnarray}
\label{ii:1}
|b_h(\xi_h,\delta_{h,2})| &\leq&  \sum_{\sigma_\bv\in \mE_\bv}\int_{\sigma_\bv}\left|\widehat{ \xi_{h,1}} \,\bn_\bv \cdot [\delta_{h,2}]_\bv\right| \,d\bx\,ds_\bv,
\\
&\leq& \frac{C_{22}}{2}\,\sum_{\sigma_\bv\in \mE_\bv}\int_{\sigma_\bv}
       [\delta_{h,2}]_\bv^2\,d\bx\,ds_\bv \,+\, \frac{C}{2\, C_{22}}\,
       \|f\|_{H^{k+1}}^2 \,h^{2k+1}.
\nonumber
\end{eqnarray} 
Now we treat the third term $|R_0|$ in (\ref{e:00}) and use the fact that $f$ and $\bP_\bv^\perp$ are continuous and
$\bv_f-\bv_{f_h}$ does not depend on $\bv$, hence after an integration
by part and by consistency of the flux, all the integrals over
$\sigma_\bv\in\mE_\bv$ vanish and there exists a constant $C>0$, only
depending on $\|\nabla_\bv f\|_{W^{1,\infty}}$, such that
\beq
\label{r:0}
|R_0| \,\leq \, C\, \|\bv_f - \bv_{f_h} \|_{L^\infty} \, \| \delta_{h,2}\|_{L^2}.
\eeq  
Finally the last term in (\ref{e:00}) is $a_h(\xi_h, \bv_{f_h},
  \delta_{h,1})$, we split it in two parts   
\beq
\label{r1:r2}
a_h(\xi_h, \bv_{f_h}, \delta_{h,1}) \,=\, R_1 + R_2,
\eeq
where $R_1$ and $R_2$ are given by
$$
\left\{
\begin{array}{lll}
R_1 \,&=& \ds\int_{\Omega\times\SSS^{d-1}}\left(\frac{\partial \xi_{h,1}}{\partial t}\delta_{h,1}  - \xi_{h,1}\bv\cdot\nabla_\bx \delta_{h,1}
-\, \left( \bP_{\bv^\perp}\bv_{f_h}\,\xi_{h,1} - \nu\,\xi_{h,2}\right)\cdot\nabla_\bv \delta_{h,1}
\right)\,d\bx d\bv,
\\
\,
\\
R_2 \,&=& \ds-\,\sum_{\sigma_\bx\in \mE_\bx}\int_{\sigma_\bx}
        \widehat{\xi_{h,1} \,\bv}\cdot \bn_x\,
        [\delta_{h,1}]_\bx\,ds_\bx \,d\bv 
\\
\,
\\
\,& \,&\ds - \,\sum_{\sigma_\bv\in \mE_\bv}\int_{\sigma_\bv}
      \left(\widehat{\xi_{h,1}\,\bP_{\bv^\perp}\bv_{f_h}} \,-\,
      \nu\, \widehat{\xi_{h,2}} \right)  \cdot \bn_\bv \, [\delta_{h,1}]_\bv
      \,ds_\bv d\bx.
\end{array}
\right.
$$
Let us first  evaluate the term $R_1$ and decompose it as $R_1=R_{11}
+ R_{12} + R_{13}$, with
$$
|R_{11}| := \left| \int_{\Omega\times\SSS^{d-1}}\frac{\partial
      \xi_{h,1}}{\partial t}\delta_{h,1} \,d\bx d\bv \right| \,\leq\,
  C\, h^{k+1}\,
  \|\partial_t f\|_{H^{k+1}} \,  \|\delta_{h,1}\|_{L^2}
$$
and $R_{12}$ is
$$
R_{12} :=
\int_{\Omega\times\SSS^{d-1}}\xi_{h,1}(\bv-\bv_0)\cdot\nabla_\bx\delta_{h,1}
\,d\bx d\bv +  \int_{\Omega\times\SSS^{d-1}}\xi_{h,1}\bv_0\cdot\nabla_\bx\delta_{h,1}
\,d\bx d\bv,  
$$
where $\bv_0$ be the $L^2$ projection of the function $\bv$ onto
the piecewise constant space with respect to $\mT_h$. Hence , by
definition of the $L^2$ projection $\xi_{h,1}=f-\Pi_h f $ the last
term vanishes and we have from Lemma \ref{lem:appr} and since $\|\bv-\bv_0\|_{L^\infty}\leq
C \,h$,
$$
|R_{12}| \,\leq \, C\, \|\bv-\bv_0\|_{L^\infty} \, h^{k} \,
\|f\|_{H^{k+1}}\,\|\delta_{h,1} \|_{L^2} \, \leq \, C \, h^{k+1} \,
\|f\|_{H^{k+1}}\,\|\delta_{h,1} \|_{L^2}.
$$
Finally, we evaluate $R_{13}$ defined as
\begin{eqnarray*}
R_{13} &:=& -\int_{\Omega\times\SSS^{d-1}}
            \bP_{\bv^\perp}(\bv_{f_h}-\bv_f)
            \,\xi_{h,1}\cdot\nabla_\bv \delta_{h,1}\,d\bx d\bv  
\\
&-& \int_{\Omega\times\SSS^{d-1}}
    \bP_{\bv^\perp}\bv_f\,\xi_{h,1}\cdot\nabla_\bv \delta_{h,1}\,d\bx
    d\bv 
\\
&+& \nu\, \int_{\Omega\times\SSS^{d-1}} \xi_{h,2}\cdot\nabla_\bv \delta_{h,1}\,d\bx d\bv.
\end{eqnarray*}
Using the definition of the $L^2$ projection $\xi_{h,2}=\bq-\Pi_h \bq
$,  we first observe  that the last term vanishes
$$
\int_{\Omega\times\SSS^{d-1}}\xi_{h,2}\cdot\nabla_\bv
\delta_{h,1}\,d\bx d\bv = 0,
$$ 
then we proceed as on the estimate of $R_{12}$ by introducing the
$L^2$ projection of the function $\bP_{\bv^\perp}\,\bv_f$ onto the
piecewise constant space with respect to $\mT^h$, whereas we apply the
Cauchy-Schwarz inequality to treat the first term. It yields that there exists a constant $C>0$,
$$
|R_{13}| \,\leq\, C\,\left( \|\bv_f -
  \bv_{f_h}\|_{L^\infty}\,\|\xi_{h,1}\|_{L^2}\, \|\nabla_\bv
  \delta_{h,1}\|_{L^2} \,+\, h^{k+1} \, \|f\|_{H^{k+1}}\,\|\delta_{h,1}\|_{L^2} \right).
$$
Again we apply  the two Lemmas \ref{lem:inverse} and \ref{lem:appr},
which gives that 
$$
|R_{13}| \,\leq\, C\, h^{k} \, \left( \|\bv_f -
  \bv_{f_h}\|_{L^\infty}  \,+\, h\right)  \, \|f\|_{H^{k+1}}\,\|\delta_{h,1}\|_{L^2}.
$$
Gathering these results on $R_{11}$, $R_{12}$ and $R_{13}$, we get the
following estimate on $R_1$,
\beq
\label{r:1}
|R_1| \,\leq\, C\, h^{k}\,\left( \|\bv_f - \bv_{f_h} \|_{L^\infty}
  \,+\, h\right)  \, \|f\|_{H^{k+1}}\,\| \delta_{h,1}\|_{L^2}.
\eeq
Now we want to estimate the term $R_2$ containing the fluxes such that
$R_2 = R_{21} +R_{22} +R_{23} $, with 
\begin{eqnarray*}
|R_{21}| &:=& \left|\sum_{\sigma_\bx\in \mE_\bx}\int_{\sigma_\bx}
        \widehat{\xi_{h,1} \,\bv}\cdot \bn_x\,
        [\delta_{h,1}]_\bx\,ds_\bx \,d\bv \right| \\
&\leq & \frac{1}{2}\,\sum_{\sigma_\bx\in \mE_\bx}\int_{\sigma_\bx}
       |\bv\cdot \bn_x|\, [\delta_{h,1}]_\bx^2\,ds_\bx \,d\bv \,+\,
        \frac{1}{2}\,\sum_{\sigma_\bx\in \mE_\bx}\int_{\sigma_\bx}
        |\xi_{h,1}|^2\,ds_\bx \,d\bv
\\
&\leq& \frac{1}{2}\,\sum_{\sigma_\bx\in \mE_\bx}\int_{\sigma_\bx}
       |\bv\cdot \bn_x|\, [\delta_{h,1}]_\bx^2\,ds_\bx \,d\bv \,+\,  C\, \|f\|_{H^{k+1}}^2\, h^{2k+1},
\end{eqnarray*}
whereas from Lemma \ref{lem:appr}
\begin{eqnarray*}
|R_{22}| &:=& \left|\sum_{\sigma_\bv\in \mE_\bv}\int_{\sigma_\bv}
      \widehat{\xi_{h,1}\,\bP_{\bv^\perp}\bv_{f_h}} \cdot \bn_\bv \, [\delta_{h,1}]_\bv
      \,ds_\bv d\bx\right|,
\\
&\leq& \frac{1}{2}\sum_{\sigma_\bv\in \mE_\bv}\int_{\sigma_\bv}
      \left|\bP_{\bv^\perp}\bv_{f_h} \cdot \bn_\bv\right|  \,
     [\delta_{h,1}]_\bv^2 
      \,ds_\bv d\bx\, + \, \frac{1}{2}\sum_{\sigma_\bv\in \mE_\bv}\int_{\sigma_\bv}
      \left|\bP_{\bv^\perp}\bv_{f_h} \cdot \bn_\bv\right| \, |\xi_{h,1}|^2
      \,ds_\bv d\bx,
\\
&\leq& \frac{1}{2}\sum_{\sigma_\bv\in \mE_\bv}\int_{\sigma_\bv}
      \left|\bP_{\bv^\perp}\bv_{f_h} \cdot \bn_\bv\right|  \,
     [\delta_{h,1}]_\bv^2 
      \,ds_\bv d\bx\, +\, C\, \|f\|_{H^{k+1}}^2\, h^{2k+1}.
\end{eqnarray*}
Finally, from the Young inequality and Lemma \ref{lem:appr}, we get that for any $\eta>0$,
\begin{eqnarray*}
|R_{23}| &:=& \nu\,\left|\sum_{\sigma_\bv\in \mE_\bv}\int_{\sigma_\bv}
              \widehat{\xi_{h,2}} \cdot \bn_\bv \, [\delta_{h,1}]_\bv
              \,ds_\bv d\bx\right|,
\\
&\leq &\frac{\nu\eta}{2}\sum_{\sigma_\bv\in
        \mE_\bv}\int_{\sigma_\bv}[\delta_{h,1}]_\bv^2\,ds_\bv
        d\bx\,+\, \frac{C\,\nu}{2\eta} \, \|f\|_{H^{k+1}}^2\, h^{2k+1}.
\end{eqnarray*}
Gathering these results on $R_{21}$, $R_{22}$ and $R_{23}$, we get the following estimate on
$R_2$ with $\eta=C_{11}$,
\begin{eqnarray}
\label{r:2}
|R_2| &\leq& \frac{1}{2}\,\sum_{\sigma_\bx\in \mE_\bx}\int_{\sigma_\bx}
       |\bv\cdot \bn_x|\, [\delta_{h,1}]_\bx^2\,ds_\bx \,d\bv 
\\ &+&  \frac{1}{2}\sum_{\sigma_\bv\in \mE_\bv}\int_{\sigma_\bv}
      \left|\bP_{\bv^\perp}\bv_{f_h} \cdot \bn_\bv\right|  \,
     [\delta_{h,1}]_\bv^2 
      \,ds_\bv d\bx
\nonumber
\\ &+& \frac{C_{11}\,\nu}{2}\sum_{\sigma_\bv\in
        \mE_\bv}\int_{\sigma_\bv}[\delta_{h,1}]_\bv^2\,ds_\bv d\bx \;+\, C\,\|f\|_{H^{k+1}}^2\, h^{2k+1}.
\nonumber
\end{eqnarray}
To conclude the proof, we consider  again (\ref{e:00})
and use the estimates obtained in (\ref{ii:1})- (\ref{r:2}),  it yields
\begin{eqnarray*}
&&\frac{1}{2}\frac{d}{dt}\int_{\Omega\times\SSS^{d-1}} |\delta_{h,1}|^2 \,d\bx d\bv \,+\, \nu \int_{\Omega\times\SSS^{d-1}} |\delta_{h,2}|^2 \,d\bx d\bv
\\
&\leq&C\,\left( \|\delta_{h,1}\|_{L^2}^2 \,+\, h^{2k+1}\,+\,
       \left( (1+h^k)\|\bv_f-\bv_{f_h}\|_{L^\infty} \,+\, h^{k+1}\right) \|\delta_{h,1}\|_{L^2}\right).
\end{eqnarray*}
Finally for any $h_0>0$, it yields to the result (\ref{deltah}) for  $h\leq h_0$.
\end{proof}
Now to complete the proof of convergence it remains to estimate the
error on the velocity field $\bv_{f_h}$.

\begin{lemma} [{Estimate of $\bv_{f_h}$}] 
Consider the numerical solution $(f_h,\bq_h)\in\mG_h^k\times \mU_h^k$ for
$k\geq 0$ given by \eqref{eq:scheme:1a}-\eqref{eq:flux:2} supplemented
with periodic boundary conditions. Then there exists a constant $C>0$
such that
\beq
\label{vfhvf}
\|\bv_f-\bv_{f_h}\|_{L^\infty}  \leq C\,\|\veps_{h,1}\|_{L^2}.
\eeq
\label{lmm:4.4}
\end{lemma}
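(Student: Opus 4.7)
The plan is to exploit the Lipschitz dependence of the normalization map $W \mapsto W/|W|$ and then to estimate the non-normalized difference $(\bJ_f+\bR_f)-(\bJ_h+\bR_h)$ pointwise in $(t,\bx)$ from the $L^2$ norm of $\veps_{h,1}=f-f_h$ via Cauchy--Schwarz. The lower bound needed in the denominator of the normalization will be the value $\xi_T$ supplied by hypothesis \eqref{hyp:01}, so no further regularity of the numerical solution is required.

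Setting $W_f := \bJ_f + \bR_f$ and $W_h := \bJ_h + \bR_h$, I would start from the elementary identity
\[
\frac{W_f}{|W_f|} - \frac{W_h}{|W_h|} \,=\, \frac{W_f - W_h}{|W_f|} \,+\, W_h\left(\frac{1}{|W_f|} - \frac{1}{|W_h|}\right),
\]
which, using $W_h(1/|W_f|-1/|W_h|) = (|W_h|-|W_f|)W_h/(|W_f|\,|W_h|)$ together with $||W_f|-|W_h|| \leq |W_f-W_h|$, yields the pointwise bound $|\bv_f - \bv_{f_h}| \leq 2|W_f - W_h|/|W_f|$. Thanks to \eqref{hyp:01}, this in turn gives $|\bv_f - \bv_{f_h}|(t,\bx) \leq (2/\xi_T)\,|W_f - W_h|(t,\bx)$ for every $(t,\bx)\in[0,T]\times\Omega$. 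The identity is arranged so that only $|W_f|$, for which a uniform lower bound is available, appears in the denominator.

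It then remains to bound $|W_f - W_h|$ pointwise in $\bx$ by $\|\veps_{h,1}\|_{L^2}$. Writing
\[
\bJ_f(t,\bx) - \bJ_h(t,\bx) \,=\, \int_{\Omega\times\SSS^{d-1}} k(|\bx - \bx'|)\,\bv'\,\veps_{h,1}(t,\bx',\bv')\,d\bx'\,d\bv',
\]
the Cauchy--Schwarz inequality combined with $|\bv'|=1$ gives $|\bJ_f - \bJ_h|(t,\bx) \leq \|k\|_{L^2(\Omega)}\,{\rm vol}(\SSS^{d-1})^{1/2}\,\|\veps_{h,1}\|_{L^2}$, and an analogous argument, based on $\rho-\rho_h = \int_{\SSS^{d-1}} \veps_{h,1}\,d\bv$, produces $|\bR_f - \bR_h|(t,\bx) \leq \|\nabla_\bx\phi\|_{L^2(\Omega)}\,{\rm vol}(\SSS^{d-1})^{1/2}\,\|\veps_{h,1}\|_{L^2}$. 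Both constants are finite because \eqref{hyp:00} ensures that $k$ and $\nabla\phi$ are continuous and compactly supported. Combining with the Lipschitz estimate above delivers \eqref{vfhvf}.

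The only subtlety, rather than obstacle, is the choice of the asymmetric splitting in the first step: it avoids needing a lower bound on $|W_h|$, which at this stage would be circular to obtain from \eqref{vfhvf} itself and which would otherwise require a continuation argument; that continuation is instead deferred to the global proof of Theorem~\ref{th:1}.
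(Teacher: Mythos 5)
Your proposal is correct and follows essentially the same route as the paper's proof: the identity you write for $W_f/|W_f|-W_h/|W_h|$ is exactly the splitting used in the paper (only $|\bJ_f+\bR_f|\geq\xi_T$ ever appears in a denominator, since $(\bJ_h+\bR_h)/|\bJ_h+\bR_h|$ is a unit vector), and the pointwise Cauchy--Schwarz bound on the convolution terms by $\|\veps_{h,1}\|_{L^2}$ is the same estimate the paper obtains via Proposition \ref{prop:1}.
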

\begin{proof}
Thanks to (\ref{res2:prop:1})  in Proposition \ref{prop:1}, there
exists a constant $C>0$, only depending on the dimension $d$, $\Omega$ and the support of
$k$ and $\phi$ such that  
$$
\|\bJ_f - \bJ_h \|_{L^\infty}  +
\|\bR_f - \bR_h \|_{L^\infty} \,\leq\, C\left( \|\phi^\prime\|_{L^2} \,+\, \|k\|_{L^2}
\right) \, \|\veps_{1}\|_{L^2(\Omega\times\SSS^{d-1})}.
$$
Then we evaluate the error $\bv_f - \bv_{f_h}$ by
$$
| \bv_f - \bv_{f_h} |=     \left|\frac{(\bJ_f+\bR_f) -
  (\bJ_h+\bR_h)}{|\bJ_f+\bR_f|} \,+\,  (\bJ_h+\bR_h) \,
\frac{|\bJ_h+\bR_h| -|\bJ_f+\bR_f|}{|\bJ_h+\bR_h|\,|\bJ_f+\bR_f|} \right|.
$$
Hence from assumption (\ref{hyp:01}) on $\bJ_f$ and $\bR_f$, there
exists a new constant $C_f>0$ depending on the exact solution $f$,
$k$, $\phi$ and $\Omega$  such
that 
$$
\| \bv_f - \bv_{f_h} \|_{L^\infty} \,\leq \frac{2}{\xi_T}\, \left( \|\bJ_f-\bJ_h\|_{L^\infty}  +
  \|\bR_f -\bR_h\|_{L^\infty} \right) \leq C_f\, \|\veps_{h,1}\|_{L^2}.
$$
\end{proof}

\subsection{Error estimates $\|f-f_h\|_{L^2}$}
To prove  Theorem \ref{th:1}, we first obtain the $L^2$
estimate on $(f_h,\bq_h)$, which  is a direct consequence of the
stability estimate proven in Lemma \ref{lem:stability} :
there exists a constant $C_T>0$ such that for any $t\in [0,T]$
$$
\|f_h(t)\|_{L^2} \,+\, \|\bq_h(t)\|_{L^2}\,\leq C_T.
$$
The error estimate follows by applying Lemma \ref{lmm:4.3} with the
estimate on $\|\bv_f-\bv_{f_h}\|_{L^\infty}$ given in   Lemma
\ref{lmm:4.4}, it yields
$$
\frac{1}{2}\frac{d}{dt}\|\delta_{h,1}\|^2_{L^2}  \,+\, \nu \|\delta_{h,2}\|^2_{L^2} \,\leq\, C\,\left[ \|\delta_{h,1}\|_{L^2}^2 \,+\, h^{2k+1}\,+\,
      \|\veps_{h,1}\|_{L^2} \, \|\delta_{h,1}\|_{L^2}\right].
$$
Then we remind that $\veps_h=\delta_h+ \xi_h$, where $\xi_{h,1}=f -
\Pi_h f$ satisfies from Lemma \ref{lem:appr}
\beq
\|\xi_{h,1}\|_{L^2}=\|f-\Pi^m f\|_{L^2(K)}\,\leq\, C\;
h_K^{k+1}\,\|f\|_{H^{k+1}(K)},\quad \forall K\in\mT_h~,
\label{dd:1}
\eeq
hence for any $h_0>0$, there exists another constant $C>0$, depending
on $f$ and $h_0$, such that $h\leq h_0$ and
$$
\frac{1}{2}\frac{d}{dt}\|\delta_{h,1}\|^2_{L^2}  \,+\, \nu \|\delta_{h,2}\|^2_{L^2} \,\leq\, C\,\left[ \,\|\delta_{h,1}\|_{L^2}^2 \,+\, h^{2k+1}\,\right].
$$
Applying the Gronwall's Lemma, we get that there exists a  constant $C_T>0$, depending
on $f$, $T$ and $h_0$, such that $h\leq h_0$ and for all $t\in [0,T]$
\beq
\label{dd:2}
\|\delta_{h,1}(t)\|_{L^2}  \,+\, \left(\int_0^t\|\delta_{h,2}\|_{L^2}^2 ds\right)^{1/2} \,\leq\, C_T\, h^{k+1/2}.
\eeq
Finally gathering (\ref{dd:1}) and (\ref{dd:2}) and using the same
kind of estimate as (\ref{dd:1}) for $\nabla_\bv f - \Pi_h \nabla_\bv f$, we get that for  $h\leq h_0$ and for all $t\in [0,T]$,
$$
\|f(t) - f_h(t)\|_{L^2}  \,+\, \left(\int_0^t\|\nabla_\bv f(s) -
  \bq_h(s)\|_{L^2}^2 ds \right)^{1/2} \,\leq\, C_T\, h^{k+1/2}.
$$ 
\section{Numerical simulations}
\setcounter{equation}{0}
\label{sec:5}
We now present several numerical experiments and simply choose
$C_{11}=C_{22}=1$. We first propose an accuracy test to verify the
order of accuracy of the method and then give two examples on creation
of vortices and band formation.

\subsection{Accuracy test}
We first consider the model  (\ref{kinetic:eq}), where the velocity $\bv_f$ is
fixed and given by $\bv_f = \bx \, t$.  The initial datum is
$$
f_0(\bx,\bv) = \frac{1}{2\pi \,\nu}
\,\exp\left(-\frac{\|\bx\|^2}{2\nu}\right), \quad\bx \in\Omega,
$$
where the computational domain is chosen as $\Omega= [-1,1]^2$. Hence the exact solution is given by 
$$
f(t,\bx,\bv) = \frac{1}{2\pi \,\nu} \,\exp\left(-\frac{\|\bx-\bv t\|^2}{2\nu}\right).
$$
In the numerical simulations, uniform meshes are used, with $N$ cells in each direction. In addition, the
third order TVD Runge–Kutta method is applied in time, with the CFL
number for the upwind and  alternating flux in $P_1$ and $P_2$
cases. In Tables \ref{tab1} and \ref{tab2}, we present the error
$\varepsilon_{N}^1$ (resp. $\varepsilon_{N}^\infty$) on the exact solution
for $L^1$ (resp. $L^\infty$) norm for $k=1$ and $2$ with
$$
\varepsilon_N^1(t) \,=\, \int_{\Omega\times \SSS^1}  | f(t) - f_h(t)|
d\bx \, d\bv, \quad  \varepsilon_N^\infty(t) \,=\,
\sup_{(\bx,\bv)\in\Omega\times\SSS^1} |f(t) - f_h(t)|.
$$

 We observe that the schemes with the
upwind and alternating fluxes achieve optimal $(k + 1)$-th order accuracy in approximating the solution compared to $(k + 1/2)$-th
order of accuracy established in the previous section.

\begin{table}[H]	
$$
\begin{tabular}{|c|c|c|c|c|c|c}
			\hline
			 \,& $N$  & $L^1$ error &  order & $L^\infty$  error & order \\ \hline
                        \, & 16 & 3.09753e-00 & --     & 1.74318e-00  & --  \\ 
                        \, & 24 & 1.57566e-00 &  1.7   & 9.76815e-01  &  1.45 \\ 
                    $k=1$ & 32 & 9.21703e-01 &  1.7   & 5.97741e-01  &  1.54\\  
                        \, & 48 & 4.03124e-01 & 1.95  & 2.73951e-01   & 1.83\\ 
                        \, & 64 & 2.18455e-01 & 2.00  & 1.51392e-01   & 1.98\\ \hline
\end{tabular}
$$

\caption{{\bf Accuracy test.}  Error norm $\varepsilon^1_N$ and
$\varepsilon^\infty_N$ for $k=1$ where $N$ represents the number of
points in each direction. }
\label{tab1}
\end{table}

\begin{table}[H]
$$	
\begin{tabular}{|c|c|c|c|c|c|c|c}
			\hline
                    \,& $N$  & $L^1$ error &  order & $L^\infty$  error & order \\ \hline
			\,& 16 & 8.61814e-01 &  --   & 5.67255e-01  &  --   \\ 
			\,& 24 & 2.71516e-01 & 2.85 & 1.95118e-01 & 2.63  \\ 
			$k=2$ & 32 & 1.13208e-01 & 2.93 & 8.55585e-02 & 2.73   \\ 
			& 48 & 3.18561e-02 & 3.09 & 2.53507e-02 & 2.94  \\ 
			& 64 & 1.41038e-02 & 3.00 & 1.05810e-02 & 3.01  \\ \hline
\end{tabular}
$$
\caption{{\bf Accuracy test.} Error norm $\varepsilon^1_N$ and
$\varepsilon^\infty_N$ for $k=2$ where $N$ represents the number of
points in each direction.  }
\label{tab2}
\end{table}

\subsection{Taylor-Green vortex problem}
We  now consider the model  (\ref{kinetic:eq})-(\ref{vel:eq}) with
periodic boundary conditions in $\Omega=(0,10)^2$, where
the velocity $\bv_f$ is given as $\bR_f\equiv 0$ and
$$
\bJ_f(t,\bx)\, =\, \int_{\Omega\times\SSS^{d-1}} k(|\bx - \bx'|) \bv' \,
 f(t,\bx',\bv') \, d\bx'd\bv', 
$$
with $k(r)=\exp(-r^2/(2\sigma^2))$ and  $\sigma=0.1$. Here we neglect
the repulsion force $\bR_f$ and only take into account the alignment
of particles with  averaged velocity $\bv_f$.

We compare the numerical solutions provided by the local discontinuous
Galerkin method with the one obtained with the particle method in
\cite{Dimarco, Dimarco2}.
 The initial data are
$$
f_0(\bx,\bv) = \rho_0\, \left(  2\,+\, v_x\,\Omega_x(\bx) \,+\, v_y\,\Omega_y(\bx)\right),
$$
where $\bv=(\cos\theta,\sin\theta)$,  and 
$$
\left\{
\begin{array}{l}
\ds\Omega_x \,=\,
  +\frac{1}{3}\left[\sin\left(\frac{\pi\,x}{5}\right)\cos\left(\frac{\pi\,y}{5}\right)
  \,+\, \sin\left(\frac{3\pi\,x}{10}\right)\cos\left(\frac{3\pi\,y}{10}\right)
  \,+\, \sin\left(\frac{\pi\,x}{2}\right)\cos\left(\frac{\pi\,y}{2}\right)
  \right]
\\
\,
\\
\ds\Omega_y \,=\,
  -\frac{1}{3}\left[\cos\left(\frac{\pi\,x}{5}\right)\sin\left(\frac{\pi\,y}{5}\right)
  \,+\, \cos\left(\frac{3\pi\,x}{10}\right)\sin\left(\frac{3\pi\,y}{10}\right)
  \,+\, \cos\left(\frac{\pi\,x}{2}\right)\sin\left(\frac{\pi\,y}{2}\right)
  \right] 
\end{array}\right.
$$
with $\bx=(x,y)\in (0,10)^2$.

This model is supplemented by periodic boundary conditions in both directions. The numerical
parameters for the kinetic model (\ref{kinetic:eq})-(\ref{vel:eq})
are : $\Delta x = \Delta y = 0.2$, $\Delta t = 0.01$. In Figure
\ref{fig:1}, we report the density $\rho$ and the flux
direction $\bU$ at different time $t\in (0,30)$ given by
$$
\rho(t,\bx) = \int_{\SSS^1} f(t,\bx,\bv)\,d\bv, \quad \rho\,\bU(t,\bx) = \int_{\SSS^1} \bv\,f(t,\bx,\bv)\,d\bv.
$$
We find a very good agreement with the results in \cite{Dimarco, Dimarco2} for
agent based models (\ref{eq:particle_1}) and macroscopic models  in
spite of the quite complex structure of the solution (see Figures 4 and
5 in \cite{Dimarco} for short time $t=5$). In our simulation, we
also present simulations for large time and observe the time evolution
of vortices.

\begin{center}
\begin{figure}[!ht]
 \begin{tabular}{cc}
\includegraphics[width=7.75cm]{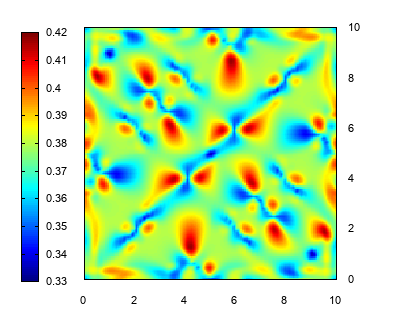} &    
\includegraphics[width=7.75cm]{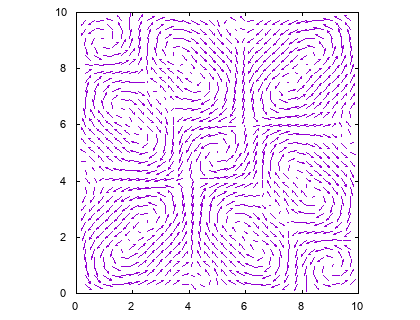} 
\\
\includegraphics[width=7.75cm]{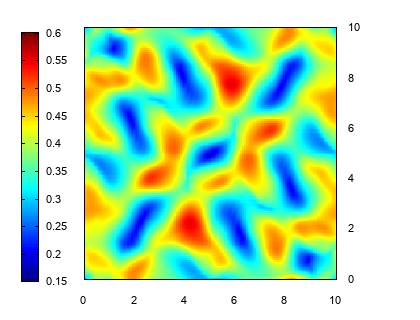} &    
\includegraphics[width=7.75cm]{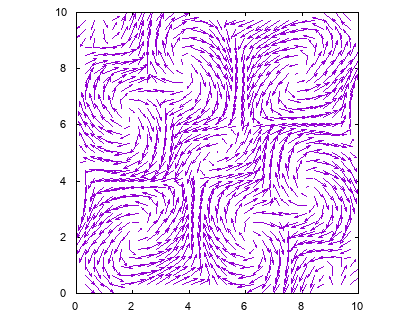} 
\\
\includegraphics[width=7.75cm]{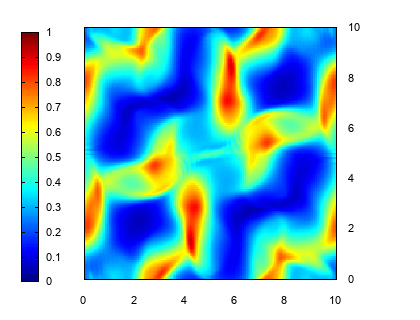} &    
\includegraphics[width=7.75cm]{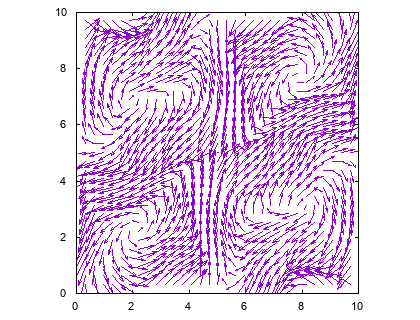} 
\\
(a)  & (b)  
\end{tabular}
\caption{ {\bf Taylor-Green vortex problem.} Numerical solution obtained with a
(a) density $\rho$, (b) mean velocity $\bu$ at time $t=5$, $t=15$  and $t=30$.}
 \label{fig:1}
\end{figure}
\end{center}

Finally we also propose  in Figure \ref{fig:2}  the time evolution of the local averaged
velocity $\bv_f$ given in (\ref{vel:eq}) and the persistence of
several vortices for large time $t\simeq 30$.
 
\begin{center}
\begin{figure}[!ht]
 \begin{tabular}{cc}
\includegraphics[width=7.75cm]{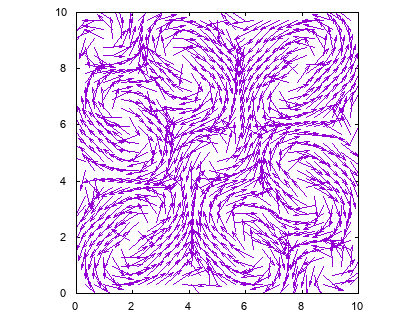} &
\includegraphics[width=7.75cm]{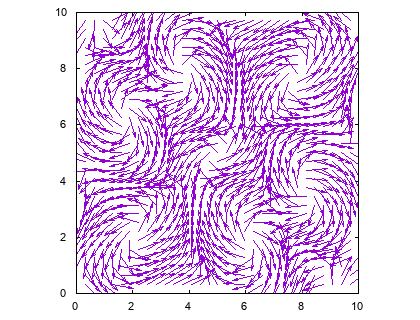} 
\\
$t= 5$ & $t=15$
\\
\includegraphics[width=7.75cm]{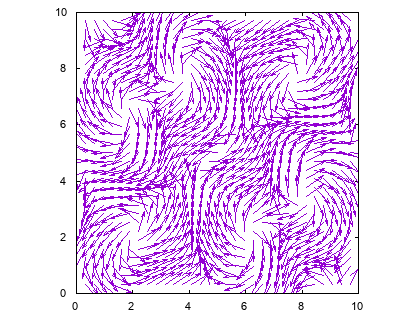} &
\includegraphics[width=7.75cm]{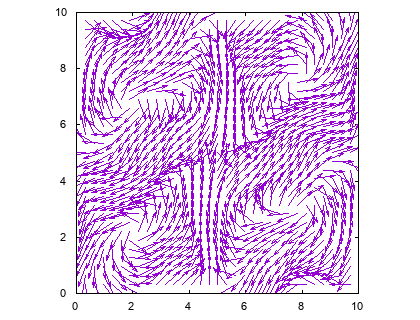} 
\\
$t=30$  & $t=50$  
\end{tabular}
\caption{ {\bf Taylor-Green vortex problem.} Numerical solution $\bv_f$ at time $t=5$, $t=15$, $t=20$ and $t=30$.}
 \label{fig:2}
\end{figure}
\end{center}

\subsection{Formation of bands problem}
We  still consider the kinetic model (\ref{kinetic:eq})-(\ref{vel:eq})
but with a different scaling for $\veps>0$, 
$$
\frac{\partial f}{\partial t} \,+\, \bv \cdot \nabla_\bx f = - \frac{1}{\veps}{\rm div}_\bv\left[ \bP_{\bv^\bot} \bv_f\, f - \nu \,\nabla_\bv f\right],
$$
where $\bv_f$ is defined as previously.  We set periodic boundary
conditions in $\Omega=(-1/2,1/2)\times (0,1)$, and the initial data is
given by
$$
f_0(\bx,\theta) = \left(1+\frac{1}{2}\,\cos(\theta)\right)\,
\left(1+\frac{3}{5}\,\sin(2\pi\,x) + \frac{3}{10}\,\cos(2\pi\,y)
\right), \quad \bx=(x,y)\in\Omega,\,\,\theta\in (0,2\pi). 
$$
We choose $\veps=0.25$ and $\nu=0.005$ and we investigate the long
time behavior of the numerical solution. On the one hand, we report
the time evolution of the density $\rho$ in Figure \ref{fig:2.1} and
observe after time $t$ larger than $15$, the formation of a band which
propagates with an horizontal velocity of speed $\simeq$ 1. Such a
behaviour has been already observed for numerical simulations of
stochastic models with only local alignment interactions \cite{chate}
as (\ref{eq:particle_1}). These moving structures appear for large
enough systems after some transient. Then, they extend transversally
with respect to the mean direction of motion. The advantage of kinetic
models as (\ref{kinetic:eq})-(\ref{vel:eq})  is that bands can be
described quantitatively through local quantities, such as the local
density $\rho$, but also the mena velocity $\bu$ and the local
averaged mean velocity $\bv_f$.

\begin{center}
\begin{figure}[!ht]
 \begin{tabular}{cc}
\includegraphics[width=7.75cm]{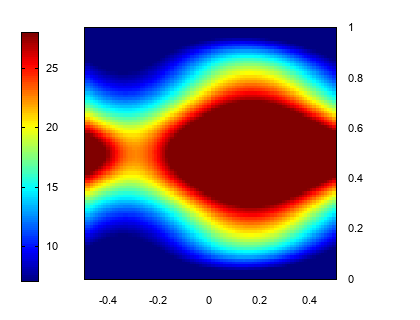} &    
\includegraphics[width=7.75cm]{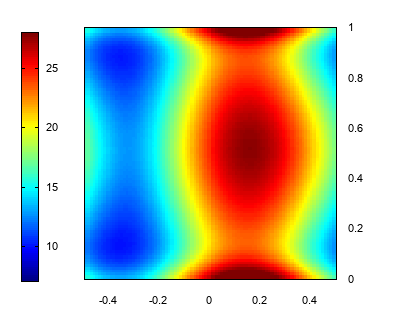} 
\\
\includegraphics[width=7.75cm]{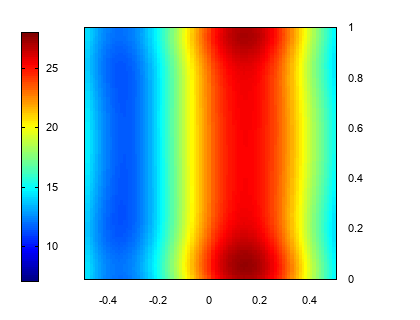} &    
\includegraphics[width=7.75cm]{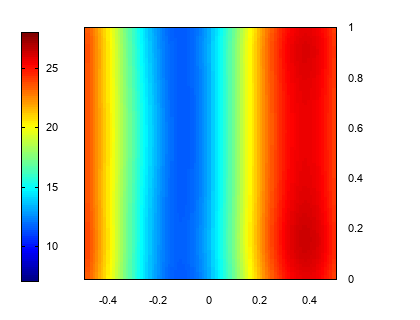} 
\\
\includegraphics[width=7.75cm]{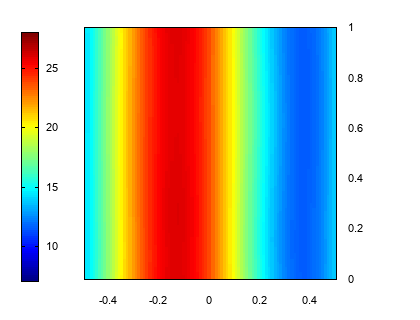} &    
\includegraphics[width=7.75cm]{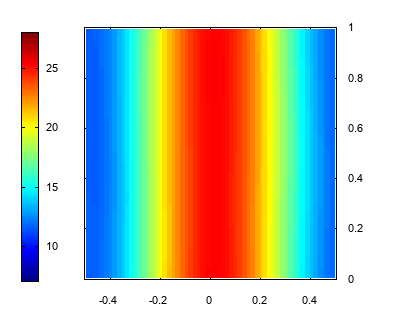} 
\end{tabular}
\caption{ {\bf Formation of bands problem.} Numerical solution of the density $\rho$ at time $t=2$, $t=9$,  $t=13$,  $t=17.25$, $t=22.75$   and $t=30$.}
 \label{fig:2.1}
\end{figure}
\end{center}

We finally propose  in Figure \ref{fig:2.2}, a snapshot of  the mean velocity
$\bu$ and the local averaged
velocity $\bv_f$ given in (\ref{vel:eq}) at the final time $t=30$. 
\begin{center}
\begin{figure}[!ht]
 \begin{tabular}{cc}
\includegraphics[width=7.75cm]{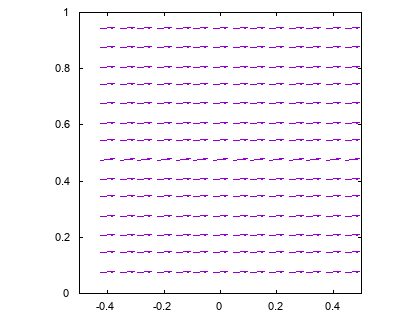} &
\includegraphics[width=7.75cm]{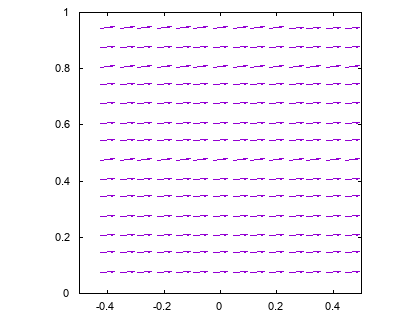} 
\\
(a) $\bu(t=30)$ & (b) $\bv_f(t=30)$  
\end{tabular}
\caption{ {\bf Formation of bands problem.} Numerical solution (a) $\bu$
  and (b) $\bv_f$ at time $t=30$.}
 \label{fig:2.2}
\end{figure}
\end{center}

\section{Conclusion and perspective}
\setcounter{equation}{0}
\label{sec:6}
In this paper we proposed a discontinuous Galerkin discretization
technique  for a kinetic model of self-alignment introduced in \cite{DM,DLMP}.  The main feature of
this approach is to guarantee the accuracy and stability for the $L^2$
norm. Furthermore,  we performed a complete analysis to get  error
estimates for smooth solutions. The
scheme has been tested using an exact solution where the order of
accuracy  has been verified. The proposed method has been applied to
study  the long time  dynamics of this system and can be further
investigated to improve the model.

\section*{Acknowledgements}
Francis Filbet acknowledges the
Division of Applied Mathematics, Brown University for the invitation
in January/February 2016, where the present work has been initiated.
The research of Chi-Wang Shu is partially supported by
DOE grant DE-FG02-08ER25863 and NSF grant DMS-1418750.

\bibliographystyle{plain}

\begin{flushleft} \signFF \end{flushleft}
\begin{flushright} \signCWS
\end{flushright}

\end{document}